%
%
%
%
%
\RequirePackage{fix-cm}
\documentclass[smallextended,envcountsect]{svjour3}       
\smartqed  
\usepackage{graphicx}
\usepackage{amsmath,amsxtra,amssymb,latexsym,amscd}
\usepackage{graphicx,color}
\usepackage[active]{srcltx}
\usepackage[utf8]{inputenc}
\usepackage[mathscr]{euscript}
\usepackage{mathrsfs,cite}
\usepackage[english]{babel}
\usepackage{color,marvosym}

\def\ar{a\kern-.370em\raise.16ex\hbox{\char95\kern-0.53ex\char'47}\kern.05em}
\def\ees{{\accent"5E e}\kern-.385em\raise.2ex\hbox{\char'23}\kern-.08em}
\def\eex{{\accent"5E e}\kern-.470em\raise.3ex\hbox{\char'176}}
\def\AR{A\kern-.46em\raise.80ex\hbox{\char95\kern-0.53ex\char'47}\kern.13em}
\def\EES{{\accent"5E E}\kern-.5em\raise.8ex\hbox{\char'23 }}
\def\EEX{{\accent"5E E}\kern-.60em\raise.9ex\hbox{\char'176}\kern.1em}
\def\ow{o\kern-.42em\raise.82ex\hbox{
		\vrule width .12em height .0ex depth .075ex \kern-0.16em \char'56}\kern-.07em}
\def\OW{O\kern-.460em\raise1.36ex\hbox{
		\vrule width .13em height .0ex depth .075ex \kern-0.16em \char'56}\kern-.07em}
\def\UW{U\kern-.42em\raise1.36ex\hbox{
		\vrule width .13em height .0ex depth .075ex \kern-0.16em \char'56}\kern-.07em}
\def\DD{D\kern-.7em\raise0.4ex\hbox{\char '55}\kern.33em}

\begin{document}

\title{On the existence  of Pareto solutions for polynomial vector optimization problems 
\thanks{The research of the first and the third author  was supported by the National Research Foundation of Korea Grant funded by the Korean Government (NRF-2016R1A2B4011589). The research of the second  author is supported by the National Foundation for Science and Technology Development (NAFOSTED), Vietnam grant 101.04-2016.05.}
}

\titlerunning{On the existence  of Pareto solutions}        

\author{Do Sang Kim$^1$ \and Ti\ees n-S\ow n Ph\d{a}m$^2$ \and Nguyen Van Tuyen$^3$}

\authorrunning{D. S. Kim, T. S. Ph\d{a}m, and N. V. Tuyen}

\institute{\Letter\ \ Nguyen Van Tuyen
	 \\
tuyensp2@yahoo.com; nguyenvantuyen83@hpu2.edu.vn
\\
\\
Do Sang Kim
\\
{dskim@pknu.ac.kr}
\\
\\
Ti\ees n-S\ow n Ph\d{a}m
\\
{sonpt@dlu.edu.vn}
\\
\\
			\at$^1$Department of Applied Mathematics, Pukyong National University, Busan 48513, Korea
	        \and
           \at$^2$Department of Mathematics, University of Dalat, 1 Phu Dong Thien Vuong, Dalat, Vietnam
           \and
           \at$^3$Department of Mathematics, Hanoi Pedagogical University 2, Xuan Hoa, Phuc Yen, Vinh Phuc, Vietnam  
}

\date{Received: date / Accepted: date}

\maketitle

\begin{abstract}
We are interested in the existence of Pareto solutions to the vector optimization problem 
$$\text{\rm Min}_{\,\mathbb{R}^m_+} \{f(x) \,|\, x\in \mathbb{R}^n\},$$
where $f\colon\mathbb{R}^n\to \mathbb{R}^m$ is a polynomial map. 
By using the {\em tangency variety} of $f$ we first construct a semi-algebraic set of dimension at most $m - 1$ containing the set of Pareto values of the problem. Then we establish connections between the Palais--Smale conditions, $M$-tameness, and properness for the map $f$. Based on these results, we provide some sufficient conditions for the existence of Pareto solutions of  the problem. We also introduce a generic class of polynomial vector optimization problems having at least one Pareto solution. 
\keywords{Existence theorems \and Pareto solutions \and $M$-tameness \and Palais--Smale conditions \and Properness \and Polynomial}
\subclass{90C29 \and 90C30  \and 49J30}
\end{abstract}

\section{Introduction}\label{Introduction}

Existence of solutions and unboundedness are important issues in (vector) optimization theory; 
we refer the readers to the book \cite{jahn04} and to the papers \cite{bao07,bao10,borwein83,Gutierrez2014,ha06} with the references therein. In this paper, we are interested in the question about the existence of Pareto solutions to the unconstrained vector optimization problem 
\begin{equation}\label{problem}
\text{\rm Min}_{\,\mathbb{R}^m_+} \{f(x) \,|\, x\in \mathbb{R}^n\},   \tag{VP}
\end{equation}
where $f\colon\mathbb{R}^n\to \mathbb{R}^m$ is a polynomial map. 

We first consider the case $m = 1.$ It is well known that   \eqref{problem} has a solution if  the objective function $f$ is {\em coercive} on $\mathbb{R}^n,$ i.e., 
$f(x)\to +\infty$  when $\|x\|\to\infty.$ This condition is equivalent to the fact that $f$ is bounded from below and satisfies the so-called {\em Palais--Smale condition}; see the survey \cite{mauhin10} for more details. Regarding to the coercivity property of polynomials, see the recent papers \cite{Bajbar2015,Jeyakumar2014}.

We next assume that $m > 1.$ By introducing some variants of the Ekeland variational principle for set-valued maps, it was proved in \cite{bao07,bao10,ha06} that the set of weak Pareto solutions of \eqref{problem} is nonempty, provided that the following two conditions hold true:
\begin{itemize}
	\item[$\bullet$] $f$ is bounded from below, i.e., there exists an element $a \in \mathbb{R}^m$ such that
	$$f(\mathbb{R}^n)\subset a+\mathbb{R}^m_+.$$ 
	
	\item[$\bullet$] $f$ satisfies a Palais--Smale type condition.
\end{itemize}
Note that both of these assumptions seem to be  rather restrictive (see examples in Sections~\ref{Section3} and \ref{Section4} below). So we would like to find better sufficient conditions for the existence of Pareto solutions of \eqref{problem} in the case where $f$ is a polynomial map.
\medskip
\\
{\bf Contribution.} We study the existence of Pareto solutions in polynomial  vector optimization problems. To do this, we will use the so-called tangency varieties and tangency values at infinity.  It is worth noting that these concepts play important
roles in the study of polynomial optimization problems; see \cite{HaHV2017}. Namely, assume that the map $f$ is polynomial, then our contribution is as follows:

(a) We will construct a semi-algebraic subset of $\mathbb{R}^m$ of dimension at most $m - 1$ containing the set of Pareto values of \eqref{problem}. 
This subset can be estimated effectively as shown very recently in \cite{Dias2017}.

(b) Under the assumption that the image $f(\mathbb{R}^n)$ has a bounded section at some $\bar{t} \in \mathbb{R}^m,$ which is indeed necessary for the existence of Pareto solutions of  \eqref{problem}, we show that the following statements are equivalent:
\begin{itemize}
	\item[$\bullet$]  $f$ is proper at the sublevel $\bar t$.
	\item[$\bullet$] $f$ satisfies the Palais--Smale condition at the sublevel $\bar{t}.$
	\item[$\bullet$] $f$ satisfies the weak Palais--Smale condition at the sublevel $\bar{t}.$
	\item[$\bullet$] $f$ is $M$-tame at the sublevel $\bar{t}.$
\end{itemize}

(c) Based on these results, we provide some sufficient conditions under which the set of Pareto solutions of \eqref{problem} is nonempty. Finally, we show a generic class of vector optimization problems having at least one Pareto solution. 

We hope that the results in this paper will be useful in finding Pareto solutions/values of polynomial vector optimization problems.

To be concrete, we state the results for polynomial maps. Analogous results, with essentially identical proofs, hold for maps definable in an ``o-minimal structure" (such as semi-algebraic maps) or, even more generally, for ``tame" maps. See \cite{Dries1996} for more on the subject.

The rest of the paper is organized as follows. In Section~\ref{Section2} we recall some preliminary results from semi-algebraic geometry. Section~\ref{Section3} is devoted to Pareto values and tangencies. Some relationships between Palais--Smale conditions, $M$-tameness, and properness for polynomial maps are also established in this section. Several sufficient conditions for the existence of Pareto solutions of  \eqref{problem} are given in Section~\ref{Section4}. 
Section~\ref{Section5} draws some conclusions.

\section{Preliminaries} \label{Section2}

We use the following notation and terminology. Fix a number  $n \in {\Bbb N}$, $n \geq 1$, and abbreviate $(x_1, x_2, \ldots, x_n)$ by $x.$  The space $\mathbb{R}^n$ is equipped with the usual scalar product $\langle \cdot, \cdot \rangle$ and the corresponding Euclidean norm $\| \cdot\|.$ The interior (resp., the closure) of a set $S$ is denoted by $\mathrm{int}\, S$ (resp., $\mathrm{cl}\,{S}$). The  closed unit ball in $\mathbb{R}^n$ is denoted by $\mathbb{B}^n.$ Let $\mathbb{R}^m_+ := \{t := (t_1, \ldots, t_m)\,|\, \, t_i\geq 0,\,\, i=1,\ldots, m\}$ be the nonnegative orthant in $\mathbb{R}^m$. The cone $\mathbb{R}^m_+$  induces the following partial order in $\mathbb{R}^m$: $x, y\in \mathbb{R}^m$, $x\leq y$ if and only if $y - x\in\mathbb{R}^m_+.$

Now, we recall some notions and results of semi-algebraic geometry, which can be found in \cite{Bochnak1998,HaHV2017}.

\begin{definition}{\rm
\begin{enumerate}
			\item[(i)] A subset of $\mathbb{R}^n$ is called {\em semi-algebraic} if it is a finite union of sets of the form
			$$\{x \in \mathbb{R}^n \ | \ f_i(x) = 0, i = 1, \ldots, k; f_i(x) > 0, i = k + 1, \ldots, p\}$$
			where all $f_{i}$ are polynomials.
			\item[(ii)]
			Let $A \subset \Bbb{R}^n$ and $B \subset \Bbb{R}^m$ be semi-algebraic sets. A  map  $F \colon A \to B$ is said to be {\em semi-algebraic} if its graph
			$$\{(x, y) \in A \times B \ | \ y = F(x)\}$$
			is a semi-algebraic subset in $\Bbb{R}^n\times\Bbb{R}^m.$
\end{enumerate}
	}\end{definition}
	
By definition, it is easy to see that the class of semi-algebraic sets is closed under taking finite intersections, finite unions and complements; a Cartesian product of semi-algebraic sets is a semi-algebraic set. Furthermore, we have the following result (see  \cite[Proposition~2.2.7]{Bochnak1998} or \cite[Section~6]{HaHV2017}).
	
\begin{theorem}[Tarski--Seidenberg Theorem] \label{TarskiSeidenbergTheorem} 
		The  image  and  inverse image of  a  semi-algebraic set  under  a  semi-algebraic  map  are semi-algebraic sets.
\end{theorem}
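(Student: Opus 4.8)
The plan is to reduce the statement to the fundamental \emph{projection theorem}: the image of a semi-algebraic set under a coordinate projection is semi-algebraic. First I would observe that both assertions follow from this. Indeed, if $F\colon A\to B$ is semi-algebraic then its graph $\Gamma=\{(x,y)\in A\times B\mid y=F(x)\}$ is semi-algebraic by definition. The image $F(A')$ of a semi-algebraic set $A'\subset A$ equals the projection onto the $\mathbb{R}^m$-factor of the semi-algebraic set $\Gamma\cap(A'\times B)$, while the inverse image $F^{-1}(B')$ of a semi-algebraic set $B'\subset B$ equals the projection onto the $\mathbb{R}^n$-factor of $\Gamma\cap(A\times B')$; here I use that Cartesian products and finite intersections of semi-algebraic sets are again semi-algebraic, as noted above. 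Thus everything comes down to projections.

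Next I would reduce to the case of a single projection. Writing a projection $\mathbb{R}^{n}\to\mathbb{R}^{k}$ as a composition of $n-k$ maps each forgetting one coordinate, it suffices by induction to prove that the image of a semi-algebraic set $S\subset\mathbb{R}^{p+1}$ under the map $\pi\colon(x,y)\mapsto x$, $\mathbb{R}^{p}\times\mathbb{R}\to\mathbb{R}^{p}$, is semi-algebraic. By the defining description, $S$ is a finite union of sets cut out by equalities and strict inequalities among finitely many polynomials $g_1,\dots,g_r\in\mathbb{R}[x,y]$; since the image of a union is the union of the images, I may assume that $\pi(S)$ is described by the condition that there exists $y$ for which a fixed \emph{sign condition} on $(g_1,\dots,g_r)$ holds. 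The task is to show this projected set is again a finite Boolean combination of polynomial sign conditions in $x$ alone.

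The heart of the argument — and the main obstacle — is the one-variable analysis: for fixed $x$, regard $g_1,\dots,g_r$ as univariate polynomials in $y$ and determine for which $x$ there exists a real $y$ realizing the prescribed signs. The decisive point is that the qualitative configuration of the real roots of the $g_i(x,\cdot)$, together with the signs of the $g_i$ on the intervals between consecutive roots, is governed by the signs of finitely many polynomials in $x$ built from the leading coefficients, discriminants, and pairwise resultants (equivalently, from the subresultant coefficients) of the $g_i$. These auxiliary polynomials partition $\mathbb{R}^{p}$ into finitely many semi-algebraic strata on each of which the number of real roots and the sign pattern of every $g_i$ between them are constant; on each stratum the existence of a $y$ with the required signs becomes a fixed combinatorial condition, and hence holds either on all of the stratum or on none of it. Collecting the strata on which such a $y$ exists exhibits $\pi(S)$ as a finite union of semi-algebraic sets, completing the induction. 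I would expect the careful bookkeeping of root configurations via the subresultant sequence (or, alternatively, Sturm's theorem applied uniformly in the parameters) to be the technically demanding step; everything else is formal closure properties of the semi-algebraic class.
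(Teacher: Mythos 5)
The paper offers no proof of this statement---it is quoted as a classical result with a reference to \cite[Proposition~2.2.7]{Bochnak1998}---so there is nothing internal to compare against; your argument is the standard one found in that source. Your reduction of images and preimages to a single coordinate projection via the graph is correct, and your identification of the parametric root-and-sign analysis (subresultants or Sturm sequences applied uniformly in the parameters, with the attendant case analysis when leading coefficients vanish) as the genuine technical content is exactly right, so the proposal is sound as a proof outline.
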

	
\begin{remark}{\rm
As an immediate consequence of the Tarski--Seidenberg Theorem, we get semialgebraicity of any set $\{ x \in A \ | \ \exists y \in B,  (x, y) \in C \},$  provided that $A ,  B,$  and $C$  are semi-algebraic sets in the corresponding spaces. It also follows that  $\{ x \in A \ | \ \forall y \in B,  (x, y) \in C \}$ is a semi-algebraic set as its complement is the union of the complement of $A$  and the set $\{ x \in A \ | \ \exists y \in B,  (x, y) \not\in C \}.$ Thus, if we have a finite collection of semi-algebraic sets, then any set obtained from them with the help of a finite chain of quantifiers is also semi-algebraic. In particular, it is not hard to see that the closure and the interior of a semi-algebraic set are semi-algebraic sets.
}\end{remark}
		
By the Cell Decomposition Theorem (see \cite[Theorem~2.3.6]{Bochnak1998}), for any $p \in \mathbb{N}$ and any nonempty semi-algebraic subset $A \subset \mathbb{R}^n,$ we can write $A$ as a disjoint union of finitely many semi-algebraic $C^p$-manifolds of different dimensions. The {\em dimension} $\dim A$ of a nonempty semi-algebraic set $A$ can thus be defined as the dimension of the manifold of highest dimension of its decomposition. This dimension is well defined and independent of the decomposition of $A.$ By convention, the dimension of the empty set is taken to be negative infinity. We will need the following result (see \cite{Bochnak1998,HaHV2017}).
		
\begin{proposition} \label{DimensionProposition}
\begin{enumerate}
				\item [{\rm (i)}] Let $A \subset \mathbb{R}^n$ be a semi-algebraic set and
				$f \colon A \to\mathbb{R}^m$ a semi-algebraic map. Then
				$\dim f(A)\leq\dim  A.$
				
				\item [{\rm (ii)}] Let $A \subset \mathbb{R}^n$ be a nonempty semi-algebraic set. Then
				$$\dim(\mathrm{cl}\,{A}\setminus A) < \dim A.$$
				In particular, $\dim\mathrm{cl}\,{A}=\dim A.$
				
				\item [{\rm (iii)}] Let $A, B \subset \mathbb{R}^n$ be semi-algebraic sets. Then 
				$$\dim (A \cup B) = \max\{ \dim A, \dim B\}.$$
\end{enumerate}
\end{proposition}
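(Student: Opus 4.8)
All three assertions flow from the Cell Decomposition Theorem together with the definition of $\dim$ as the largest dimension of a manifold occurring in a decomposition, so I would prove them in the order (iii), (i), (ii), reusing the earlier parts. The basic tool is that, given any finite family of semi-algebraic subsets of $\mathbb{R}^n$, the Cell Decomposition Theorem yields a single decomposition of $\mathbb{R}^n$ into cells each of which is either contained in or disjoint from every member of the family. From this one reads off \emph{monotonicity}: if $S \subseteq T$ are semi-algebraic, a decomposition compatible with both exhibits the cells of $S$ as a subfamily of those of $T$, whence $\dim S \le \dim T$. For (iii) I would then take a decomposition compatible with $A$ and $B$; every cell contained in $A \cup B$ is contained in $A$ or in $B$, and conversely, so the top-dimensional cell inside $A \cup B$ lies in one of them, giving $\dim(A\cup B) \le \max\{\dim A, \dim B\}$, while monotonicity applied to $A, B \subseteq A\cup B$ supplies the reverse inequality.

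For (i) I would first invoke Theorem~\ref{TarskiSeidenbergTheorem} to ensure that $f(A)$ is semi-algebraic, so that $\dim f(A)$ is defined, and then pass to the graph $\Gamma := \{(x, f(x)) : x \in A\} \subseteq \mathbb{R}^n\times\mathbb{R}^m$. Two facts, both immediate from the cylindrical form of cell decompositions, are needed: (a) a coordinate projection never raises dimension, since each cell projects onto a base cell of dimension equal to its own or one smaller; and (b) $\dim\Gamma = \dim A$, because in a decomposition of $\mathbb{R}^n\times\mathbb{R}^m$ compatible with $\Gamma$ the graph sits over the base as a union of section cells, each of the same dimension as its base cell. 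Since $f(A)$ is the image of $\Gamma$ under the projection onto the last $m$ coordinates, (a) and (b) combine to yield $\dim f(A) \le \dim\Gamma = \dim A$.

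For (ii) I would note that $\mathrm{cl}\,A$, and hence $\mathrm{cl}\,A \setminus A$, are semi-algebraic (closures of semi-algebraic sets are semi-algebraic, as observed in the Remark following Theorem~\ref{TarskiSeidenbergTheorem}). Put $d := \dim A$ and choose a cell decomposition of $\mathbb{R}^n$ compatible with $A$, so that $A$ is a disjoint union of finitely many cells $C_i$ with $\dim C_i \le d$. The structural \emph{frontier condition} for cell decompositions states that the frontier $\mathrm{cl}\,C_i \setminus C_i$ of each cell is a finite union of cells of strictly smaller dimension, so $\dim(\mathrm{cl}\,C_i\setminus C_i) < \dim C_i \le d$. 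Any point of $\mathrm{cl}\,A\setminus A$ lies in $\mathrm{cl}\,C_i$ for some $i$ but not in $A \supseteq C_i$, hence $\mathrm{cl}\,A \setminus A \subseteq \bigcup_i(\mathrm{cl}\,C_i\setminus C_i)$, and (iii) gives $\dim(\mathrm{cl}\,A\setminus A) < d$. The concluding equality $\dim\mathrm{cl}\,A = \dim A$ then follows from (iii) applied to $\mathrm{cl}\,A = A \cup (\mathrm{cl}\,A\setminus A)$.

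The routine part throughout is the bookkeeping with compatible decompositions; the genuine content lies in the two structural inputs drawn from the cylindrical form of cell decompositions, namely the projection bound (a) in (i) and, above all, the frontier condition in (ii). I expect the strict inequality in (ii) — establishing that the frontier of a cell drops dimension — to be the main obstacle, since once it is in hand everything else reduces to monotonicity and the finite-union formula of (iii).
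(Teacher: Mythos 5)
The paper does not prove this proposition at all: it is quoted as a known result with a pointer to \cite{Bochnak1998} and \cite{HaHV2017}, so there is no in-paper argument to compare against. Your proof is, in substance, the standard one from those references (cf.\ \cite[\S 2.8]{Bochnak1998}), and the logical architecture --- monotonicity and (iii) from a common compatible decomposition, (i) via the graph plus the fact that projections do not raise dimension, (ii) via the dimension drop of cell frontiers --- is correct. Two points deserve to be made explicit if this were written out in full. First, in (i), the reason a cell of a decomposition of $\mathbb{R}^n\times\mathbb{R}^m$ that is contained in $\Gamma$ must be a section (graph-type) cell over its base, rather than a band, is that $\Gamma$ meets each fiber $\{x\}\times\mathbb{R}^m$ in at most one point; also, the cylindrical structure is adapted to projecting away the \emph{last} coordinates, so to project $\Gamma$ onto the last $m$ coordinates you should permute coordinates (harmless, since dimension is invariant under linear isomorphisms, but worth saying). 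Second, and more importantly, the frontier condition you invoke in (ii) --- that $\dim(\mathrm{cl}\,C\setminus C)<\dim C$ for a cell $C$ --- is not part of the Cell Decomposition Theorem as stated in Section~\ref{Section2}; it is itself a nontrivial structural fact, proved in \cite{Bochnak1998} by induction on $n$ using the cylindrical form of the decomposition. You correctly identify it as the genuine content of (ii), but as written your argument assumes it rather than proves it, so the proposal is complete only modulo that lemma. Everything else is sound bookkeeping.
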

		
In the sequel, we will need the following useful results (see, for example, \cite{HaHV2017}).
		
\begin{lemma}[Curve Selection Lemma at infinity]\label{CurveSelectionLemma}
Let $A\subset \mathbb{R}^n$ be a semi-algebraic set, and let $f := (f_1, \ldots,f_m) \colon  \mathbb{R}^n \to \mathbb{R}^m$ be a semi-algebraic map. Assume that there exists a sequence $\{x^\ell\}$ such that $x^\ell \in A$, $\lim_{\ell \to \infty} \| x^\ell  \| = \infty$ and $\lim_{\ell \to \infty} f(x^\ell)  = y \in(\overline{\mathbb{R}})^m,$ where $\overline{\mathbb{R}} := \mathbb{R} \cup \{\pm \infty\}.$ Then there exists a smooth semi-algebraic curve $\varphi \colon (0, \epsilon)\to \mathbb{R}^n$ such that $\varphi(t) \in A$ for all $t \in (0, \epsilon), \lim_{t \to 0} \|\varphi(t)\| = \infty,$ and $\lim_{t \to 0} f(\varphi(t)) = y.$
\end{lemma}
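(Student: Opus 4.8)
The plan is to reduce this ``at infinity'' statement to the classical Curve Selection Lemma at a finite point (the standard version for semi-algebraic sets, as in \cite{Bochnak1998}) by means of two semi-algebraic changes of coordinates: one that pulls the point at infinity of the domain back to the origin, and one that compactifies the target so that the possibly infinite limit $y$ becomes an honest finite point.

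First I would tame the target. Let $\theta \colon \mathbb{R} \to (-1,1)$, $\theta(s) := s/\sqrt{1+s^2}$, which is a semi-algebraic diffeomorphism onto the open interval and extends to a homeomorphism $[-\infty,+\infty] \to [-1,1]$ with $\theta(\pm\infty) = \pm 1$. Applying it coordinatewise gives a semi-algebraic diffeomorphism $\Theta \colon \mathbb{R}^m \to (-1,1)^m$ extending to a homeomorphism $\overline{\mathbb{R}}^m \to [-1,1]^m$. Set $g := \Theta \circ f$; this is a bounded semi-algebraic map, and writing $\bar y := \Theta(y) \in [-1,1]^m$, the hypothesis becomes $g(x^\ell) \to \bar y$ with $\bar y$ now a genuine finite point.

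Next I would bring infinity in the domain to the origin by the inversion $\psi(u) := u/\|u\|^2$, a semi-algebraic involution of $\mathbb{R}^n \setminus \{0\}$ satisfying $\|\psi(u)\| = 1/\|u\|$. Consider the semi-algebraic set
$$S := \{ (u,w) \in (\mathbb{R}^n \setminus \{0\}) \times \mathbb{R}^m : \psi(u) \in A, \ w = g(\psi(u)) \}.$$
Setting $u^\ell := \psi(x^\ell) = x^\ell/\|x^\ell\|^2$ and $w^\ell := g(x^\ell)$, we have $(u^\ell, w^\ell) \in S$, $u^\ell \to 0$ (since $\|u^\ell\| = 1/\|x^\ell\| \to 0$), and $w^\ell \to \bar y$; hence $(0, \bar y) \in \mathrm{cl}\,S$. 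The classical Curve Selection Lemma then supplies a smooth semi-algebraic curve $\sigma = (u, w) \colon (0,\epsilon) \to S$ with $\sigma(t) \to (0,\bar y)$ as $t \to 0$.

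Finally I would transport the curve back. Define $\varphi(t) := \psi(u(t)) = u(t)/\|u(t)\|^2$, which is smooth and semi-algebraic because $u(t) \neq 0$ for $t \in (0,\epsilon)$ by the definition of $S$. By construction $\varphi(t) \in A$, while $\|\varphi(t)\| = 1/\|u(t)\| \to \infty$, and $g(\varphi(t)) = w(t) \to \bar y$. Recovering $f = \Theta^{-1} \circ g$ and using that the extension of $\Theta^{-1}$ to $[-1,1]^m \to \overline{\mathbb{R}}^m$ is continuous at $\bar y$, we conclude $f(\varphi(t)) = \Theta^{-1}(w(t)) \to \Theta^{-1}(\bar y) = y$, as required. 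I expect the only delicate point to be the bookkeeping at the target: one must use a single compactification of $\overline{\mathbb{R}}^m$ and check that convergence to the (possibly infinite) value $y$ is transferred faithfully through $\Theta$ and its continuous boundary extension. Everything else is a routine application of the finite Curve Selection Lemma together with the fact that inversion and $\Theta$ are semi-algebraic diffeomorphisms.
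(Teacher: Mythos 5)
Your argument is correct. Note first that the paper does not prove this lemma at all: it is quoted as a known preliminary fact with a citation to \cite{HaHV2017}, so there is no in-paper proof to compare against. Your reduction --- compactifying the target coordinatewise via $s \mapsto s/\sqrt{1+s^2}$, pulling infinity in the domain back to the origin by the inversion $u \mapsto u/\|u\|^2$, applying the classical (Nash) Curve Selection Lemma to the graph-like semi-algebraic set $S$ at the boundary point $(0,\bar y) \in \mathrm{cl}\,S$, and transporting back --- is exactly the standard proof of the curve selection lemma at infinity, and all the steps check out: $S$ is semi-algebraic by Tarski--Seidenberg, $u(t)\neq 0$ on $(0,\epsilon)$ guarantees smoothness of $\varphi$, and encoding $w = g(\psi(u))$ inside $S$ means you never need continuity of $f$, only that the extended homeomorphism $[-1,1]^m \to \overline{\mathbb{R}}^m$ is continuous at $\bar y$. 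The only microscopic bookkeeping point worth adding is that $x^\ell \neq 0$ for all large $\ell$ (since $\|x^\ell\|\to\infty$), so the inversion of the given sequence is well defined.
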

		
\begin{lemma}[Growth Dichotomy Lemma] \label{GrowthDichotomyLemma}
Let $f \colon (0, \epsilon) \rightarrow {\Bbb R}$ be a semi-algebraic function with $f(t) \ne 0$ for all $t \in (0, \epsilon).$ Then there exist constants $c \ne 0$ and $q \in {\Bbb Q}$ such that $f(t) = ct^q + o(t^q)$ as $t \to 0^+.$
\end{lemma}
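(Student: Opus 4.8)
The plan is to reduce the asymptotic behaviour of the one-variable semi-algebraic function $f$ near $0^+$ to the classical Newton--Puiseux description of the branches of a plane algebraic curve, and then simply read off the leading term. The only genuinely analytic input will be Puiseux's theorem; everything else is semi-algebraic bookkeeping built on the Cell Decomposition Theorem and Proposition~\ref{DimensionProposition}.

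First I would produce a polynomial equation satisfied by $f$. The graph $\Gamma := \{(t, f(t)) \mid t \in (0,\epsilon)\}$ is a semi-algebraic subset of $\mathbb{R}^2$, and, being in semi-algebraic bijection with the interval $(0,\epsilon)$ via the projection $(t,y)\mapsto t$ and its inverse $t \mapsto (t,f(t))$, Proposition~\ref{DimensionProposition}(i) gives $\dim \Gamma = 1$. Hence its Zariski closure is a real algebraic curve of the same dimension, so $\Gamma$ lies in the zero set of a nonzero polynomial
$$ P(t, y) = \sum_{j=0}^{d} a_j(t)\, y^j, \qquad a_j \in \mathbb{R}[t], \quad a_d \not\equiv 0, $$
that is, $P(t, f(t)) = 0$ for all $t \in (0,\epsilon)$. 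Necessarily $d \geq 1$: if $P$ did not involve $y$, then $P=P(t)$ would vanish on the whole interval and hence be the zero polynomial, a contradiction. Shrinking $\epsilon$ so that $a_d(t) \neq 0$ on $(0,\epsilon)$, we conclude that for each such $t$ the value $f(t)$ is one of the (at most $d$) roots of the polynomial $y \mapsto P(t,y)$.

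Next I would invoke Puiseux's theorem. For $t > 0$ small, the roots of $P(t,y) = 0$ are given by finitely many Puiseux series
$$ y_i(t) = \sum_{k \geq k_i} c_{i,k}\, t^{k/N}, \qquad i = 1, \ldots, r, $$
with a common denominator $N \in \mathbb{N}$, leading exponents $k_i \in \mathbb{Z}$, and $c_{i,k_i} \neq 0$ whenever $y_i \not\equiv 0$; each $y_i$ is continuous on some interval $(0,\delta)$. By the Cell Decomposition Theorem I may also shrink $\epsilon$ so that $f$ is continuous on $(0,\epsilon)$. Any two distinct branches coincide only at the finitely many zeros in $(0,\delta)$ of their (nonzero) difference; let $\epsilon' > 0$ be smaller than all these crossing parameters. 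On $(0, \epsilon')$ the branches are pairwise disjoint and continuous, while the continuous function $f$ takes values among them; hence the index $i$ with $f(t) = y_i(t)$ is locally constant, and therefore constant, on $(0,\epsilon')$. Thus $f$ agrees with a single branch $y_{i_0}$ there.

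Finally, since $f(t) \neq 0$ on $(0,\epsilon)$, the branch $y_{i_0}$ is not identically zero, so its leading coefficient $c := c_{i_0, k_{i_0}}$ is nonzero. Setting $q := k_{i_0}/N \in \mathbb{Q}$ yields
$$ f(t) = c\, t^{q} + o(t^{q}) \qquad (t \to 0^+), $$
as required. The step deserving the most care is the matching of $f$ to a single analytic branch: one must rule out $f$ jumping between branches as $t \to 0^+$, which is exactly what the finiteness of the crossing set together with the continuity of $f$ (supplied by Cell Decomposition) guarantees. If one prefers an argument internal to semi-algebraic geometry that avoids citing Puiseux's theorem, the exponent $q$ can instead be defined as the threshold separating the exponents $r$ for which $t^{-r} f(t) \to 0$ from those for which it diverges, the existence and rationality of this threshold again following from the monotonicity consequences of the Cell Decomposition Theorem.
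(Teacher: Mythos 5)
The paper does not prove this lemma: it is quoted as a standard preliminary from the semi-algebraic geometry literature (the reference \cite{HaHV2017}; see also \cite[Prop.~2.5.2]{Bochnak1998}), so there is no in-paper proof to compare against. Your argument is the standard Newton--Puiseux proof of that quoted result and is essentially correct: the graph of $f$ is a one-dimensional semi-algebraic subset of $\mathbb{R}^2$, hence satisfies a nontrivial polynomial relation $P(t,f(t))=0$ (your justification that such a $P$ exists parallels the device the paper itself uses for $\Phi(\Gamma(f))$ in the proof of Proposition~\ref{Proposition31}), and continuity of $f$ on a subinterval (Lemma~\ref{MonotonicityLemma}) forces $f$ to follow a single Puiseux branch, whose leading term gives $c$ and $q$. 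Two small points you should make explicit: the Puiseux branches produced by the theorem are a priori complex, so you need to observe that the branch $f$ adheres to is real-valued on an interval and hence $c=\lim_{t\to 0^+}f(t)/t^q$ is real; and the nonvanishing of $c$ comes from $y_{i_0}\not\equiv 0$, which your hypothesis $f(t)\neq 0$ indeed guarantees. Neither is a genuine gap.
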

		
\begin{lemma}[Monotonicity Lemma] \label{MonotonicityLemma}
Let $a < b$ in $\mathbb{R}.$ If $f \colon [a, b] \rightarrow \mathbb{R}$ is a semi-algebraic function, then there is a partition $a =: t_1 < \cdots < t_{N} := b$ of $[a, b]$ such that $f|_{(t_l, t_{l + 1})}$ is $C^1,$ and either constant or strictly monotone, for $l \in \{1, \ldots, N - 1\}.$
\end{lemma}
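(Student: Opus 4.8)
The plan is to separate the statement into two independent tasks: first, cutting $[a,b]$ into finitely many open pieces on which $f$ is $C^1$; second, controlling the sign of $f'$ on each such piece so that $f$ becomes constant or strictly monotone there. The conclusion will then follow by patching together the (finitely many) breakpoints produced by the two tasks.

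First I would apply the Cell Decomposition Theorem \cite[Theorem~2.3.6]{Bochnak1998} in its $C^1$ form, adapted to the semi-algebraic function $f$. In dimension one the cells are points and open intervals, so this produces a finite partition $a = s_1 < \cdots < s_K = b$ such that the restriction of $f$ to each open interval $(s_i, s_{i+1})$ is of class $C^1$. This disposes of the smoothness requirement; nothing need be claimed at the breakpoints themselves, which is consistent with the statement, since only the open pieces $(t_l, t_{l+1})$ appear in the conclusion.

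Next I would analyze $f'$ on a fixed piece $I = (s_i, s_{i+1})$. The derivative of a semi-algebraic function is again semi-algebraic: the graph of $f'$ is defined from the (semi-algebraic) graph of $f$ by a first-order formula asserting the existence of the limit of difference quotients, so semialgebraicity follows from the Tarski--Seidenberg Theorem (Theorem~\ref{TarskiSeidenbergTheorem}) together with the closure of semi-algebraic sets under quantifiers recorded in the remark after it. Consequently the three sets $\{t \in I : f'(t) > 0\}$, $\{t \in I : f'(t) = 0\}$ and $\{t \in I : f'(t) < 0\}$ are semi-algebraic subsets of $\mathbb{R}$, and every such subset is a finite union of points and open intervals. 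Adjoining the finitely many endpoints arising in this way (over all pieces) to the breakpoints $s_i$, I obtain a single finite partition $a = t_1 < \cdots < t_N = b$ on each open interval of which $f'$ has constant sign: strictly positive, identically zero, or strictly negative. The mean value theorem then finishes the argument: where $f' \equiv 0$ the function $f$ is constant, where $f' > 0$ throughout it is strictly increasing, and where $f' < 0$ it is strictly decreasing.

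I expect the genuine content, and the main obstacle, to lie entirely in the first step, namely the finiteness of the set of non-$C^1$ points. This is where the tameness of semi-algebraic functions (the absence of oscillation) is decisive, and it is precisely what cell decomposition encodes. If one prefers a route through the lemmas stated just above, the same finiteness can be extracted from the Growth Dichotomy Lemma (Lemma~\ref{GrowthDichotomyLemma}): near any $t_0$ the nonzero semi-algebraic function $t \mapsto f(t) - f(t_0)$ admits a one-sided expansion $c\,(t - t_0)^q + o\bigl((t - t_0)^q\bigr)$ with $c \neq 0$, which pins down the local monotonicity on each side of $t_0$. The delicate point in that approach is the bookkeeping needed to show that only finitely many points can act as transitions between these local behaviors, which is exactly the finiteness that cell decomposition supplies for free.
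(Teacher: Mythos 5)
Your argument is correct, but note that the paper itself offers no proof of this lemma: it is quoted as a standard fact from semi-algebraic geometry with a pointer to the literature (\cite{HaHV2017}, or \cite{Bochnak1998}), so there is no ``paper proof'' to match against. Your route --- piecewise $C^1$ decomposition, semialgebraicity of $f'$ via Tarski--Seidenberg, sign analysis of $f'$, then the mean value theorem --- is the standard textbook derivation and each step is sound. Two small points are worth flagging. First, the Cell Decomposition Theorem as recalled in Section~2 of the paper only decomposes \emph{sets} into $C^p$-manifolds; what you actually need is the version adapted to a function (a decomposition of $[a,b]$ such that $f$ restricted to each open cell is $C^1$), which is a genuinely stronger statement, though it is available in \cite{Bochnak1998} and, in the one-variable case, follows from applying cell decomposition in $\mathbb{R}^2$ to the graph of $f$. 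Second, in the o-minimal literature the Monotonicity Lemma is usually proved \emph{before} cell decomposition (precisely because it is an ingredient of the latter), via a direct argument that never mentions derivatives: one shows that the set of points near which $f$ is constant, strictly increasing, or strictly decreasing covers all but finitely many points of $(a,b)$, using only the finiteness of zero sets of one-variable semi-algebraic functions. Your closing remark about the Growth Dichotomy Lemma points in that direction; in the purely semi-algebraic category there is no circularity in your order of deduction, but if one wanted the statement for a general o-minimal structure (as the paper's remark in the introduction suggests), the derivative-free argument is the safer one.
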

		
\section{Pareto values and tangencies}\label{Section3}
		
\subsection{Pareto values}
		
Let $f := (f_1,\ldots, f_m)\colon\mathbb{R}^n\to\mathbb{R}^m$ be a map and consider the vector optimization problem~\eqref{problem} formulated in Section~\ref{Introduction}.
		
\begin{definition}{\rm 
Let $t\in \,\mathrm{cl} f(\mathbb{R}^n)$. We say that:
\begin{enumerate}
	\item [{\rm (i)}] $t$ is a {\em Pareto (optimal) value} of ~\eqref{problem} if 
				$$f(x)\notin t - (\mathbb{R}^m_+\setminus\{0\}) \quad \textrm{ for all } \quad x \in \mathbb{R}^n.$$
				The set of  all Pareto values of \eqref{problem}  is denoted by $\mathrm{val}\,\eqref{problem}$.
				
\item [{\rm (ii)}] $t$ is a {\em weak Pareto (optimal) value} of \eqref{problem} if 
				$$f(x)\notin t -{\rm int}\,\mathbb{R}^m_+\quad \textrm{ for all } \quad  x\in \mathbb{R}^n.$$
				The set of  all weak Pareto values of \eqref{problem} is denoted by $\mathrm{val}^w\,\eqref{problem}$.
				
\item [{\rm (iii)}] A point $x^*$ is said to be a {\em Pareto (optimal) solution} (resp., {\em weak Pareto (optimal) solution})  if $f(x^*)$ is a Pareto value (resp., weak Pareto value) of \eqref{problem}. The set of all Pareto solutions (resp., weak Pareto solutions)  is denoted by $\mathrm{sol}\,\eqref{problem}$ (resp., $\mathrm{sol}^w\,\eqref{problem}$).
\end{enumerate}
}\end{definition}
			
\begin{remark}{\rm 
\begin{enumerate}
	\item [{\rm (i)}] By definition, it is clear that $\mathrm{val}\,\eqref{problem}\subset \mathrm{val}^w\,\eqref{problem}.$ Note that the inclusion may be strict. 
					
	\item [{\rm (ii)}] In the case of $m = 1$ and $f$ is bounded from below on $\mathbb{R}^n$, 
					$$\mathrm{val}\,\eqref{problem}=\mathrm{val}^w\,\eqref{problem}=\{\inf_{x\in\mathbb{R}^n} f(x)\}.$$
					
	\item [{\rm (iii)}] A (weak) Pareto value of the problem~\eqref{problem} does not necessarily belong to $f(\mathbb{R}^n)$ as shown in the example below.
\end{enumerate}
}\end{remark}
				
\begin{example}{\rm 
\begin{enumerate}
	\item [{\rm (i)}]	Let $f\colon\mathbb{R}^3\to\mathbb{R}^2$ be the polynomial map defined by
$$f(x_1, x_2, x_3):=(x_3, x_1^2+(x_1x_2-1)^2+x_3^2).$$ 
We have that
$$f(\mathbb{R}^3)=\{t=(t_1, t_2)\in\mathbb{R}^2\,|\, t_2>t_1^2\}$$
is an open set in $\mathbb{R}^2$. Furthermore, it is easy to see that 
$$\mathrm{val}\,\eqref{problem}=\mathrm{val}^w\,\eqref{problem}=\{t=(t_1, t_2)\in\mathbb{R}^2\,|\, t_2=t_1^2, t_1\leq 0\} \ne \emptyset.$$
Hence $\mathrm{val}\,\eqref{problem}\cap f(\mathbb{R}^3) = \mathrm{val}^w\,\eqref{problem}\cap f(\mathbb{R}^3) = \emptyset,$ and so $\mathrm{sol}\,\eqref{problem}=\mathrm{sol}^w\,\eqref{problem}=\emptyset.$
						
\item [{\rm (ii)}] In the recent paper \cite{Fernando2016} (see also \cite{Fernando2003,Fernando2017}) it was proved that the open quadrant
$$\{(t_1, t_2) \in \mathbb{R}^2 \, | \, t_1 > 0, t_2 > 0\}$$
is the image of the polynomial map
$f \colon \mathbb{R}^2 \to \mathbb{R}^2, \ (x_1,x_2) \mapsto ((x_1^2x_2^4 + x_1^4x_2^2 - x_2^2 -1)^2 + x_1^6x_2^4, (x_1^6x_2^2 + x_1^2x_2^2 - x_1^2 - 1)^2 + x_1^6x_2^4).$
For this $f,$ we have
$$\mathrm{val}\,\eqref{problem}=\mathrm{val}^w\,\eqref{problem} = \{(t_1, t_2) \in \mathbb{R}^2 \, | \, t_1 t_2 = 0, t_1 \ge 0, t_2 \ge 0 \}  \ne \emptyset.$$
Therefore, $\mathrm{val}\,\eqref{problem}\cap f(\mathbb{R}^2) = \mathrm{val}^w\,\eqref{problem}\cap f(\mathbb{R}^2) = \emptyset,$ and so $\mathrm{sol}\,\eqref{problem}=\mathrm{sol}^w\,\eqref{problem}=\emptyset.$
\end{enumerate}
}\end{example}
					
\begin{remark}{\rm  
It was proved very recently in \cite{huong16} that both the proper Pareto solution set and the weak Pareto solution set of a vector variational inequality, where the convex constraint set is given by polynomial functions and all the components of the basic operators are polynomial functions, have finitely many connected components, provided that the Mangasarian--Fromovitz constraint qualification is satisfied at every point of the constraint set. In addition, if the proper Pareto solution set is dense in the Pareto solution set, then the latter also has finitely many connected components. Applying the above result to vector optimization problems under polynomial constraints, where all the components of the basic operators are polynomial functions, the authors obtained some topological properties of the stationary point set, as well as the weak Pareto solution set, of the problem in question.
							
We would like to remark that all the results in the cited paper can be concluded immediately from Theorem~\ref{TarskiSeidenbergTheorem} without any convexity assumption or constraint qualification conditions. Indeed it suffices to assume that maps and constraint sets are semi-algebraic. As an illustrative example, we prove here that the sets $\mathrm{val}\,\eqref{problem}$  and $\mathrm{sol}\,\eqref{problem}$ are semi-algebraic provided that $f$ is a (not necessarily continuous) semi-algebraic map and so, thanks to \cite[Theorem~2.4.4]{Bochnak1998}, they have a finite number of (path) connected components. 
							
Let $f :=(f_1,\ldots, f_m)\colon\mathbb{R}^n\to\mathbb{R}^m$ be a semi-algebraic map. By Theorem~\ref{TarskiSeidenbergTheorem}, the set $f(\mathbb{R}^n)$ is semi-algebraic and so is $\mathrm{cl}\,f(\mathbb{R}^n).$ Let $\phi$ and $\psi$ be two functions defined by
\begin{align*}
\phi &\colon \mathbb{R}^n\times\mathbb{R}^m\to\mathbb{R}, \quad (x, t)\mapsto \max_{i}\{f_i(x)-t_i\},\\
\psi&\colon \mathbb{R}^n\times\mathbb{R}^m\to\mathbb{R}, \quad (x, t)\mapsto \sum_{i=1}^m [f_i(x)-t_i]^2.
\end{align*}
In view of Theorem~\ref{TarskiSeidenbergTheorem}, it is easy to see that $\phi $ and $\psi$ are semi-algebraic functions. Furthermore, by definition we have
\begin{align*}
\mathrm{val}\,\eqref{problem}&=\{t\in \mathrm{cl}\, f(\mathbb{R}^n)\,\,|\,\, \forall x\in\mathbb{R}^n, f(x) \notin t - (\mathbb{R}^m_+\setminus\{0\})\}
							\\
&=\{t\in \mathrm{cl}\, f(\mathbb{R}^n)\,\,|\,\,\forall x\in\mathbb{R}^n, \phi (x, t)>0 \,\,\mbox{\rm or}\,\, \psi(x, t)=0\}.
\end{align*}
Note that $\psi(x, t)\geq 0$ on $\mathbb{R}^n\times\mathbb{R}^m$. Hence
$$
\mathrm{cl}\,f(\mathbb{R}^n)\setminus\mathrm{val}\,\eqref{problem}= \{t\in \mathrm{cl}\, f(\mathbb{R}^n)\,\,|\,\, \exists x\in\mathbb{R}^n,  \phi (x, t)\leq 0 \,\,\mbox{\rm and}\,\, \psi(x, t)>0\}.
$$
Thanks to Theorem~\ref{TarskiSeidenbergTheorem}, this set is   semi-algebraic because it is the projection onto the last $m$ coordinates of the following semi-algebraic set 
$$\{(x, t)\in\mathbb{R}^n \times \mathrm{cl}\, f(\mathbb{R}^n) \,\,|\,\, \phi (x, t)\leq 0 \,\,\mbox{\rm and}\,\, \psi(x, t)>0\}.$$ 
Therefore, $\mathrm{val}\eqref{problem}$ is   a semi-algebraic set. 

Finally, the set $\mathrm{sol}\,\eqref{problem} = f^{-1}(\mathrm{val}\,\eqref{problem})$
is semi-algebraic because of Theorem~\ref{TarskiSeidenbergTheorem} again.
							
Similarly, it is easy to check that the sets $\mathrm{val}^w\,\eqref{problem}$ and $\mathrm{sol}^w\,\eqref{problem}$ are semi-algebraic and so, by \cite[Theorem~2.4.4]{Bochnak1998}, they have a finite number of connected components, which are semi-algebraic. 
}\end{remark}
						
\subsection{Tangencies} 
Let $f:=(f_1,\ldots, f_m)\colon\mathbb{R}^n\to\mathbb{R}^m$ be a polynomial  map. A point $t\in\mathbb{R}^m$ is called a {\em regular value} for $f$ if either $f^{-1}(t)=\emptyset$ or the derivative map $D f(x)\colon\mathbb{R}^n\to\mathbb{R}^m$ is surjective at every point $x\in f^{-1}(t)$.  A point $t\in\mathbb{R}^m$ that is not a regular value of $f$
is called a {\em critical value}. We will denote by $K_0(f)$ the set of critical values of $f$. 
						
\begin{definition}[see \cite{HaHV2017}]{\rm 
(i) By the {\em tangency variety of $f$} we mean the set
$$\Gamma (f):=\{x\in \mathbb{R}^n\,|\, \exists \lambda_i,  \mu\in\mathbb{R},  \,\,\mbox{not all zero, such that}  \,\, \sum_{i=1}^m \lambda_i\nabla f_i(x)+ \mu x=0\},$$
here and in the following $\nabla f_i(x)$ stands for the gradient of $f_i$ at $x.$
								
(ii) The set of {\em tangency values {\rm(}at infinity{\rm)}} of $f$ is defined by
$$T_{\infty}(f):=\{t\in\mathbb{R}^m\,\,|\,\,  \exists \{x^k\}\subset \Gamma(f), \|x^k\|\to +\infty \,\,\mbox{and}\,\, f(x^k)\to t \,\,\mbox{as}\,\, k\to\infty\}.$$
}\end{definition}
							
\begin{remark}{\rm	Very recently, relying on results from semi-algebraic geometry, it was proved in \cite{Dias2017} (see also \cite{Dias2015,Jelonek2014}) that the set of tangency values at infinity of polynomial maps can be estimated effectively.
}\end{remark}
								
\begin{lemma}\label{Lemma31}
$\Gamma(f)$ is an unbounded nonempty semi-algebraic set.
\end{lemma}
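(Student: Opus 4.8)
The plan is to establish the three asserted properties one at a time, treating semi-algebraicity and unboundedness directly and obtaining nonemptiness as a by-product of unboundedness.

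First I would handle semi-algebraicity by exhibiting $\Gamma(f)$ as a projection. Introduce
$$
\Sigma := \Big\{(x, \lambda, \mu) \in \mathbb{R}^n \times \mathbb{R}^m \times \mathbb{R} \ \Big| \ \sum_{i=1}^m \lambda_i \nabla f_i(x) + \mu x = 0, \ (\lambda, \mu) \neq 0 \Big\}.
$$
The defining equalities are polynomial in $(x, \lambda, \mu)$ and the open condition $(\lambda, \mu) \neq 0$ is semi-algebraic, so $\Sigma$ is semi-algebraic; since $\Gamma(f)$ is precisely the image of $\Sigma$ under the coordinate projection $(x, \lambda, \mu) \mapsto x$, the Tarski--Seidenberg Theorem (Theorem~\ref{TarskiSeidenbergTheorem}) gives that $\Gamma(f)$ is semi-algebraic.

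For unboundedness, which will also yield nonemptiness, I would fix an arbitrary radius $r > 0$ and restrict the component $f_1$ to the sphere $S_r := \{x \in \mathbb{R}^n \mid \|x\| = r\}$. As $S_r$ is compact and $f_1$ continuous, $f_1|_{S_r}$ attains a minimum at some $x_r \in S_r$. The constraint $\|x\|^2 = r^2$ is regular on $S_r$ since its gradient $2x$ is nonzero there (because $\|x\| = r > 0$), so the Lagrange multiplier rule furnishes a scalar $c \in \mathbb{R}$ with $\nabla f_1(x_r) = c\, x_r$. Setting $\lambda_1 = 1$, $\lambda_2 = \cdots = \lambda_m = 0$, and $\mu = -c$ produces a nonzero coefficient vector (since $\lambda_1 = 1$) satisfying $\sum_{i=1}^m \lambda_i \nabla f_i(x_r) + \mu x_r = 0$; hence $x_r \in \Gamma(f)$ with $\|x_r\| = r$. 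Because $r > 0$ is arbitrary, $\Gamma(f)$ contains points of arbitrarily large norm and is therefore unbounded, and in particular nonempty.

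This argument is essentially routine, and I expect no serious obstacle. The only points that need care are verifying that the sphere constraint is regular so that the Lagrange rule applies---guaranteed by $r > 0$---and confirming that the multipliers produced are not all zero, which the normalization $\lambda_1 = 1$ ensures. The potentially worrisome degenerate cases, namely $\nabla f_1(x_r) = 0$ or $f_1$ constant, are in fact harmless: they merely force $c = 0$, while $\lambda_1 = 1$ still gives a valid nontrivial witness, so $x_r$ remains in $\Gamma(f)$.
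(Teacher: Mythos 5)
Your proof is correct, and its skeleton is the same as the paper's: establish semi-algebraicity via Tarski--Seidenberg, then produce points of $\Gamma(f)$ on spheres of every radius $r>0$ by optimizing on $S_r$ and invoking a first-order optimality condition. The one substantive difference is the optimality condition used: the paper takes a Pareto solution of the vector problem $\text{\rm Min}_{\,\mathbb{R}^m_+}\{f(x)\,|\,x\in S_r\}$ and applies the Fritz--John conditions, whereas you minimize the single component $f_1$ and apply the classical Lagrange multiplier rule. Your variant is slightly more elementary and self-contained---it needs only Weierstrass plus the scalar Lagrange rule, and it exhibits the multipliers explicitly with the normalization $\lambda_1=1$ guaranteeing nontriviality (whereas Fritz--John delivers a nonzero multiplier vector automatically). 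Your explicit check that the sphere constraint is regular and your discussion of the degenerate cases $\nabla f_1(x_r)=0$ are both correct and, if anything, more careful than the paper's one-line appeal to Fritz--John. Both arguments are equally valid; there is no gap.
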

\begin{proof}
By Theorem~\ref{TarskiSeidenbergTheorem}, it is easy to check that the set $\Gamma(f)$ is semi-algebraic.
									
We next show that $\Gamma(f) \ne \emptyset.$ To this end, take any $R > 0.$ Then the sphere $S_R := \{x \in \mathbb{R}^n \ | \ \|x\|^2 = R^2 \}$ is nonempty compact. Hence, the optimization problem 
$\text{\rm Min}_{\,\mathbb{R}^m_+} \{f(x) \,|\, x\in S_R\}$	has a Pareto solution, say $x(R) \in S_R.$ The Fritz-John optimality conditions \cite[Theorem~7.4]{jahn04} imply that $x(R) \in \Gamma(f),$ and so $\Gamma(f) \ne \emptyset.$  Finally, it is clear that if $R \to \infty$ then $\|x(R)\| = R \to \infty,$ which proves the lemma. $\hfill\Box$
\end{proof}
								
We now give a simple and constructive proof of the following known result \cite[Theorem~2.5]{Dias2015}, \cite[Theorem~5.7]{Dias2012}, \cite[Theorem~1.1]{HaHV2017} and \cite[Theorem~1.5]{Loi1998}.
\begin{proposition}\label{Proposition31}
	$T_{\infty}(f)$ is a closed semi-algebraic set of dimension at most $m - 1.$
\end{proposition}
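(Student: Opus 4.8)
The plan is to establish the three asserted properties of $T_{\infty}(f)$—semi-algebraicity, closedness, and the dimension bound—in that order, the last being the substantive one. Throughout I introduce the squared-norm function $\rho(x):=\|x\|^2$, the augmented polynomial map $g:=(f,\rho)\colon\mathbb{R}^n\to\mathbb{R}^{m+1}$, and the semi-algebraic set $K:=g(\Gamma(f))\subset\mathbb{R}^{m+1}$; writing a point of $\mathbb{R}^{m+1}$ as $(t,u)$ with $t\in\mathbb{R}^m$, $u\in\mathbb{R}$, I also set $\widehat K:=\{(t,v)\,|\,v>0,\ (t,1/v)\in K\}$, which reparametrizes the direction $u\to+\infty$.

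For semi-algebraicity, note that $\Gamma(f)$ is semi-algebraic by Lemma~\ref{Lemma31}, so $K=g(\Gamma(f))$ and hence $\widehat K$ are semi-algebraic by the Tarski--Seidenberg Theorem, and so is $\mathrm{cl}\,\widehat K$. A direct unwinding of the definitions shows that $t\in T_{\infty}(f)$ if and only if there exist $x^k\in\Gamma(f)$ with $\|x^k\|\to\infty$ and $f(x^k)\to t$, which is equivalent to $(t,0)\in\mathrm{cl}\,\widehat K$. In fact one gets the exact identity $T_{\infty}(f)\times\{0\}=\mathrm{cl}\,\widehat K\cap(\mathbb{R}^m\times\{0\})$. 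The right-hand side is semi-algebraic, so $T_{\infty}(f)$ is semi-algebraic; and being the intersection of two closed sets, it is closed. (Closedness can alternatively be seen by a diagonal argument: given $t^j\to t$ with $t^j\in T_{\infty}(f)$, pick $x^j\in\Gamma(f)$ with $\|x^j\|>j$ and $\|f(x^j)-t^j\|<1/j$; then $\|x^j\|\to\infty$ and $f(x^j)\to t$, so $t\in T_{\infty}(f)$.)

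The heart of the matter is the dimension bound, and the key observation is that $\nabla\rho(x)=2x$, so the defining condition of $\Gamma(f)$—existence of $(\lambda,\mu)\ne 0$ with $\sum_i\lambda_i\nabla f_i(x)+\mu x=0$—says exactly that the rows $\nabla f_1(x),\ldots,\nabla f_m(x),\nabla\rho(x)$ of the Jacobian $Dg(x)$ are linearly dependent. Thus $\Gamma(f)$ is precisely the critical set of $g=(f,\rho)$, and $K=g(\Gamma(f))$ is its set of critical values, whence $\dim K\le m$ by the semi-algebraic Sard theorem (the critical values have dimension strictly below $\dim\mathbb{R}^{m+1}=m+1$). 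If one prefers to avoid citing Sard, one stratifies $\Gamma(f)$ into finitely many smooth semi-algebraic manifolds by the Cell Decomposition Theorem; on each stratum the differential of $g$ is a restriction of $Dg$, whose rank is $\le m$ on all of $\Gamma(f)$, so each image piece has dimension $\le m$ by Proposition~\ref{DimensionProposition}(i), and the bound follows by Proposition~\ref{DimensionProposition}(iii). Finally I transfer the bound across infinity: since $(t,u)\mapsto(t,1/u)$ is a semi-algebraic bijection, $\dim\widehat K=\dim(K\cap\{u>0\})\le\dim K\le m$, and since $T_{\infty}(f)\times\{0\}$ lies in $\mathbb{R}^m\times\{0\}$ it is disjoint from $\widehat K\subset\mathbb{R}^m\times(0,\infty)$. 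Hence $T_{\infty}(f)\times\{0\}\subseteq\mathrm{cl}\,\widehat K\setminus\widehat K$, and Proposition~\ref{DimensionProposition}(ii) gives $\dim T_{\infty}(f)=\dim(T_{\infty}(f)\times\{0\})\le\dim(\mathrm{cl}\,\widehat K\setminus\widehat K)<\dim\widehat K\le m$, i.e.\ $\dim T_{\infty}(f)\le m-1$.

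The main obstacle is exactly this dimension estimate, and the crucial idea is identifying $\Gamma(f)$ with the critical locus of the augmented map $(f,\|\cdot\|^2)$: this is what forces the value set $K$ to be at most $m$-dimensional, in contrast to the naive bound through $\dim\Gamma(f)$, which can be as large as $n$ (for instance $\Gamma(f)=\mathbb{R}^n$ when $m=1$ and $f=\|\cdot\|^2$). The remaining drop to $m-1$ is then soft, coming from Proposition~\ref{DimensionProposition}(ii), and reflects that one dimension is spent in letting $\|x\|\to\infty$.
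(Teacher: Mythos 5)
Your proof is correct, and its central idea coincides with the paper's: you identify $\Gamma(f)$ as the critical locus of the augmented map $\Phi=(f,\|\cdot\|^2)\colon\mathbb{R}^n\to\mathbb{R}^{m+1}$ and invoke the semi-algebraic Sard theorem to get $\dim\Phi(\Gamma(f))\le m$. Where you diverge is in extracting the final drop to $m-1$. The paper proceeds algebraically: it encloses $\Phi(\Gamma(f))$ in the zero set of a single nonzero polynomial $P(t,R)=a_0(t)R^d+\cdots+a_d(t)$ and observes that any $t$ reached along $R\to+\infty$ must satisfy $a_0(t)=0$, so $T_\infty(f)\subset a_0^{-1}(0)$. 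You instead compactify the $R$-direction via $(t,u)\mapsto(t,1/u)$, identify $T_\infty(f)\times\{0\}$ with a subset of the frontier $\mathrm{cl}\,\widehat K\setminus\widehat K$, and apply Proposition~\ref{DimensionProposition}(ii). Both routes are sound; yours is more intrinsic and avoids choosing a polynomial description (and also settles semi-algebraicity and closedness in the same stroke via the identity $T_\infty(f)\times\{0\}=\mathrm{cl}\,\widehat K\cap(\mathbb{R}^m\times\{0\})$, whereas the paper disposes of these by a routine quantifier argument). What the paper's version buys is an explicit polynomial $a_0$ whose zero set contains $T_\infty(f)$ --- this constructive byproduct is exactly what the remark following the proposition exploits when discussing effective approximation of $T_\infty(f)$. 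One small point to keep in mind in your argument: Proposition~\ref{DimensionProposition}(ii) is stated for nonempty sets, so the case $\widehat K=\emptyset$ should be noted separately (it is trivial, since then $T_\infty(f)=\emptyset$).
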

\begin{proof}	By definition and Theorem~\ref{TarskiSeidenbergTheorem}, it is not hard to check that $T_{\infty} (f)$ is a closed semi-algebraic set.
									
Consider the semi-algebraic map 
$$\Phi \colon \mathbb{R}^n \rightarrow \mathbb{R}^{m + 1}, \quad x \mapsto (f(x), \|x\|^2) .$$ 
In view of Lemma~\ref{Lemma31} and Theorem~\ref{TarskiSeidenbergTheorem}, the image $\Phi(\Gamma(f))$ is semi-algebraic. By definition, $\Gamma(f)$ is the set of critical points of $\Phi.$ Thanks to Sard's theorem (see, for example, \cite[Theorem 1.9]{HaHV2017}), the set $\Phi(\Gamma(f))$ is of dimension at most $m,$  and so it cannot contain a nonempty open subset of ${\mathbb R}^{m + 1}.$ 
									
On the other hand, since $\Phi(\Gamma(f))$ is semi-algebraic, we can write 
$$\Phi(\Gamma(f)) = \bigcup_{i = 1}^s \{(t, R) \in {\mathbb R}^m \times {\mathbb R}  \, | \, g_i(t, R) = 0,\  h_{i_j}(t, R) > 0, \ j = 1, \ldots, k_i\}$$
for some polynomials $g_i$ and $h_{i_j}.$ Then we must have $g_i \not \equiv 0$ for all $i = 1, \ldots, s,$ because otherwise $\Phi(\Gamma(f))$ would contain a nonempty open subset of ${\mathbb R}^{m + 1}$, a contradiction. Let $P \colon \mathbb{R}^{m + 1} \rightarrow \mathbb{R}$ be the product of all the polynomials $g_i, i = 1, \ldots, s.$ Clearly, $P \not \equiv 0$ and 
$$\Phi(\Gamma(f))  \subset \{(t, R) \in \mathbb{R}^m \times \mathbb{R} \ | \ P(t, R) = 0 \}.$$
Write
$$P(t, R) = a_0(t) R^d + \cdots + a_d(t)$$
for some polynomials $a_i(t)$ with $a_0(t) \not \equiv 0.$ By definition, then
\begin{eqnarray*}
T_{\infty}(f)
	&=& \{t\in\mathbb{R}^m \ | \ \exists (t^k, R^k) \in \Phi(\Gamma(f)), R^k \to +\infty \,\,\mbox{and}\,\, t^k \to t \,\,\mbox{as}\,\, k\to\infty\} \\
	&\subset& \{t\in\mathbb{R}^m \ | \ \exists (t^k, R^k) \in P^{-1}(0), R^k \to +\infty \,\,\mbox{and}\,\, t^k \to t \,\,\mbox{as}\,\, k\to\infty\} \\
		&\subset& \{t \in\mathbb{R}^m \ | \ a_0(t) = 0 \}.
\end{eqnarray*}
Therefore, $\dim T_\infty(f) \le m - 1,$ which completes the proof.  $\hfill\Box$
\end{proof}
								
\begin{remark}{\rm	In \cite{Magron2014,Magron2015}, by using semidefinite programming relaxations, the authors provided several methods to approximate as closely as desired the image of semi-algebraic sets under polynomial maps with super-level sets of single polynomials of fixed degrees. This fact, together with the proof of Proposition~\ref{Proposition31}, gives us a hope that the set $\Phi(\Gamma(f))$  and so $T_\infty(f)$ can be approximated effectively.
}\end{remark}
									
The next statement describes a relation between Pareto values and tangency values.
									
\begin{theorem} \label{Theorem31}The following inclusions hold true
$$\mathrm{val}\,\eqref{problem} \subset \mathrm{val}^w\,\eqref{problem}\subset K_0(f)\cup T_{\infty}(f).$$ 
In particular, the semi-algebraic sets $\mathrm{val}\,\eqref{problem}$  and $\mathrm{val}^w\,\eqref{problem}$ are of dimension at most $m - 1.$
\end{theorem}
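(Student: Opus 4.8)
The plan is to prove the two inclusions separately and then read off the dimension bound. The first inclusion $\mathrm{val}\,\eqref{problem}\subset\mathrm{val}^w\,\eqref{problem}$ is immediate: since $\mathrm{int}\,\mathbb{R}^m_+\subset\mathbb{R}^m_+\setminus\{0\}$, we have $t-\mathrm{int}\,\mathbb{R}^m_+\subset t-(\mathbb{R}^m_+\setminus\{0\})$, so any $t$ for which no $x$ satisfies $f(x)\in t-(\mathbb{R}^m_+\setminus\{0\})$ a fortiori admits no $x$ with $f(x)\in t-\mathrm{int}\,\mathbb{R}^m_+$. For the substantive inclusion I would fix $t\in\mathrm{val}^w\,\eqref{problem}$, assume $t\notin K_0(f)$, and prove $t\in T_\infty(f)$; this gives $\mathrm{val}^w\,\eqref{problem}\subset K_0(f)\cup T_\infty(f)$.

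First I would rule out the possibility $t\in f(\mathbb{R}^n)$. If $t=f(\bar x)$ for some $\bar x$, then because $t$ is a regular value the derivative $Df(\bar x)$ is surjective, so $f$ is a submersion near $\bar x$ and therefore an open map there; hence $f(\mathbb{R}^n)$ contains a full neighborhood of $t$, in particular a point $t-\delta(1,\dots,1)\in t-\mathrm{int}\,\mathbb{R}^m_+$ with $\delta>0$ small. This contradicts $t\in\mathrm{val}^w\,\eqref{problem}$. Thus $t\in\mathrm{cl}\,f(\mathbb{R}^n)\setminus f(\mathbb{R}^n)$, and any sequence $\{x^k\}$ with $f(x^k)\to t$ necessarily satisfies $\|x^k\|\to\infty$, for a bounded subsequence would converge to a preimage of $t$.

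The core step is to manufacture points of $\Gamma(f)$ escaping to infinity whose images converge to $t$. I would work with the polynomial $\theta(x):=\|f(x)-t\|^2=\sum_{i=1}^m(f_i(x)-t_i)^2$, which satisfies $\inf_{\mathbb{R}^n}\theta=0$ with the infimum not attained. For each large $R>0$, minimizing the continuous $\theta$ over the compact sphere $S_R=\{\|x\|^2=R^2\}$ yields a minimizer $x(R)$, and the Lagrange multiplier rule gives $\nabla\theta(x(R))=\mu\,x(R)$ for some $\mu\in\mathbb{R}$, that is, $\sum_{i=1}^m 2(f_i(x(R))-t_i)\nabla f_i(x(R))-\mu\,x(R)=0$. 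Since $t\notin f(\mathbb{R}^n)$ we have $f(x(R))\ne t$, so the coefficients $2(f_i(x(R))-t_i)$ are not all zero and hence $x(R)\in\Gamma(f)$. Finally, choosing $R_k:=\|x^k\|\to\infty$, the inequality $\min_{S_{R_k}}\theta\le\theta(x^k)\to0$ forces $f(x(R_k))\to t$; as $\|x(R_k)\|=R_k\to\infty$ and $x(R_k)\in\Gamma(f)$, this yields $t\in T_\infty(f)$, completing the inclusion. I expect this construction — recognizing that the sphere-constrained minimization of $\|f-t\|^2$ simultaneously places the minimizers in $\Gamma(f)$ via the multiplier rule and drives them to infinity with values tending to $t$ — to be the main obstacle; everything else is bookkeeping.

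For the dimension assertion I would combine $\dim T_\infty(f)\le m-1$ from Proposition~\ref{Proposition31} with the bound $\dim K_0(f)\le m-1$, which follows from the semi-algebraic Sard theorem applied to the (semi-algebraic) critical value set of $f$. By Proposition~\ref{DimensionProposition}(iii), $\dim\bigl(K_0(f)\cup T_\infty(f)\bigr)\le m-1$, and since $\mathrm{val}\,\eqref{problem}\subset\mathrm{val}^w\,\eqref{problem}\subset K_0(f)\cup T_\infty(f)$ are semi-algebraic, monotonicity of dimension gives $\dim\mathrm{val}\,\eqref{problem}\le\dim\mathrm{val}^w\,\eqref{problem}\le m-1$.
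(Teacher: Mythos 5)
Your proposal is correct and follows essentially the same route as the paper's proof: handling the case $t\in f(\mathbb{R}^n)$ via regularity (the paper invokes the Karush--Kuhn--Tucker conditions to place such $t$ in $K_0(f)$, which is the contrapositive of your open-mapping argument), then minimizing $\|f(x)-t\|^2$ over spheres $\|x\|^2=\|x^k\|^2$ and using the multiplier rule to land the minimizers in $\Gamma(f)$ with images tending to $t$, and finally combining the semi-algebraic Sard theorem with Proposition~\ref{Proposition31} and Proposition~\ref{DimensionProposition} for the dimension bound. No gaps.
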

									
\begin{proof}The first inclusion is obvious. Let us prove the second one. Fix $t\in \mathrm{val}^w\, \eqref{problem}$. 
										
If $t\in f(\mathbb{R}^n)$, then $t\in K_0(f)$ due to the Karush--Kuhn--Tucker necessary conditions \cite[Theorem~7.4]{jahn04}. So assume that $t \in\,\mathrm{cl} f(\mathbb{R}^n)\setminus f(\mathbb{R}^n)$. Then there is a sequence $\{x^k\}$ such that $\displaystyle\lim_{k\to\infty} f(x^k)=t$. We claim that $\displaystyle\lim_{k\to\infty}\|x^k\|=+\infty$. Indeed, if it is not the case, then the sequence $\{x^k\}$  has an accumulation point, say $x^* \in \mathbb{R}^n.$  By the continuity of $f$, we have $t = f(x^*) \in f(\mathbb{R}^n),$ which is a contradiction. 
										
For each $k\in\mathbb{N}$, we  consider the scalar optimization problem
\begin{eqnarray*}
&& \min \|f(x)-t\|^2 \\
&& \textrm{s.t.} \quad x \in \mathbb{R}^n, \ \|x\|^2=\|x^k\|^2.
\end{eqnarray*}
Since $\{x\in\mathbb{R}^n\,\,|\,\,  \|x\|^2=\|x^k\|^2\}$ is a nonempty compact set in $\mathbb{R}^n$, this problem admits an optimal solution, say $y^k$. It is easy to check that the sequence $\{y^k\}$ has the following properties:
\begin{enumerate}
\item[(a)] $\displaystyle\lim_{k\to\infty} \|y^k\|= \lim_{k\to\infty} \|x^k\|=+\infty$,
											
\item[(b)] $0\leq \|f(y^k)-t\|^2\leq \|f(x^k)-t\|^2$, and 
											
\item[(c)] there exists $\mu^k\in\mathbb{R}$ such that
$$\sum_{i=1}^m (f_i(y^k)-t_i)\nabla f_i(y^k) +\mu^k y^k=0.$$
(This follows from the Karush--Kuhn--Tucker necessary conditions.)
\end{enumerate}
Since $t\notin f(\mathbb{R}^n)$, one has $f(y^k)\neq t$ for all $k\in\mathbb{N}$. Therefore $\{y^k\}\subset \Gamma (f)$. Moreover, we have
$$0\leq\lim_{k\to\infty}\|f(y^k)-t\|^2\leq \lim_{k\to\infty}\|f(x^k)-t\|^2=0,$$ 
and so $\displaystyle\lim_{k\to\infty}f(y^k) = t.$ Thus  $t\in T_{\infty} (f).$ 
										
Finally, due to the Sard theorem (see, for example,  \cite[Theorem 1.9]{HaHV2017}), $K_0(f)$ is a semi-algebraic set of dimension at most $m - 1.$ 
This, together with Propositions~\ref{DimensionProposition} and \ref{Proposition31}, implies the last statement.  $\hfill\Box$
\end{proof}
									
\subsection{Palais--Smale conditions, $M$-tameness and properness}
									
Given a differentiable map $f := (f_1,\ldots, f_m)\colon\mathbb{R}^n\to\mathbb{R}^m$ and a value $\bar{t} \in (\mathbb{R} \cup \{+ \infty\})^m,$ we let
\begin{eqnarray*}
\widetilde{K}_{\infty, \le \bar{t}}(f) &:=& \{t\in\mathbb{R}^m\,|\,  \exists \{x^k\}\subset \mathbb{R}^n, f(x^k) \le \bar t, \|x^k\|\to +\infty,  f(x^k)\to t,  \mbox{and } \\
& & \qquad \qquad \qquad \qquad \qquad \qquad\qquad \qquad\ \  \nu_f(x^k)\to 0  \,\mbox{ as } \, k\to\infty\},\\
K_{\infty, \le {\bar{t}}}(f) &:= & \{t\in\mathbb{R}^m\,|\,  \exists \{x^k\}\subset \mathbb{R}^n, f(x^k) \le {\bar{t}}, \|x^k\|\to +\infty, f(x^k)\to t, \mbox{and } \\
& & \qquad \qquad \qquad \qquad \qquad \qquad\qquad\, \   \|x^k\|\nu_f(x^k)\to 0  \, \mbox{ as } \, k\to\infty\}, \\
T_{\infty, \le {\bar{t}}}(f) &:=& \{t\in\mathbb{R}^m\,\,|\,\,  \exists \{x^k\}\subset \Gamma(f), f(x^k) \le {\bar{t}}, \|x^k\|\to +\infty,  \,\mbox{and}
\\
& & \qquad \qquad \qquad \qquad \qquad \qquad\qquad \qquad\ \ \ \   f(x^k)\to t \,\,\mbox{as}\,\, k\to\infty\},
\end{eqnarray*}
where $\nu_f\colon\mathbb{R}^n\to\mathbb{R}$ is the Rabier function (see \cite{kurdyka00,Rabier1997}) defined by
$$\nu_f(x):=\min_{\sum_{i=1}^m|\lambda_i|=1}\left\|\sum_{i=1}^m \lambda_i\nabla f_i(x)\right\|.$$
Note that if $m = 1$ then $\nu_f(x) = \| \nabla f(x)\|.$

For simplicity of notation, when $\bar{t} = (+\infty, \ldots, +\infty),$ we write $\widetilde{K}_{\infty}(f),$ ${K}_{\infty}(f),$ and $T_\infty(f)$ instead of 
$\widetilde{K}_{\infty, \le \bar{t}}(f),$ $K_{\infty, \le \bar{t}}(f),$ and $T_{\infty, \le \bar{t}}(f),$ respectively.
									
The following result is a generalization of \cite[Theorem 2.8]{Dias2015}, \cite[Theorem~1.1]{vui07},  and \cite[Proposition~3.1]{kurdyka00}.
									
\begin{proposition} \label{Proposition32}
Let $f\colon \mathbb{R}^n\to\mathbb{R}^m$ be a polynomial map and $\bar{t} \in (\mathbb{R} \cup \{+ \infty\})^m.$ The following inclusions hold:
\begin{equation*} T_{\infty, \le {\bar{t}}}(f)\subset K_{\infty, \le {\bar{t}}}(f)\subset \widetilde{K}_{\infty, \le \bar{t}}(f).
\end{equation*}
Furthermore, if $n \le m,$ then  these inclusions are equalities.
\end{proposition}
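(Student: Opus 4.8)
The plan is to prove the chain $T_{\infty, \le \bar{t}}(f) \subset K_{\infty, \le \bar{t}}(f) \subset \widetilde{K}_{\infty, \le \bar{t}}(f)$ in general, and then to establish the reverse inclusion $\widetilde{K}_{\infty, \le \bar{t}}(f) \subset T_{\infty, \le \bar{t}}(f)$ under the hypothesis $n \le m$, which upgrades the chain to equalities. The inclusion $K_{\infty, \le \bar{t}}(f) \subset \widetilde{K}_{\infty, \le \bar{t}}(f)$ is immediate: along a sequence $\{x^k\}$ witnessing membership in the left-hand set we have $\|x^k\| \to +\infty$, so $\|x^k\| \ge 1$ eventually, whence $0 \le \nu_f(x^k) \le \|x^k\|\,\nu_f(x^k) \to 0$, and the same sequence witnesses membership in $\widetilde{K}_{\infty, \le \bar{t}}(f)$.

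For the inclusion $T_{\infty, \le \bar{t}}(f) \subset K_{\infty, \le \bar{t}}(f)$, fix $t$ in the left-hand set and apply the Curve Selection Lemma at infinity (Lemma~\ref{CurveSelectionLemma}) to the semi-algebraic set $A := \{x \in \Gamma(f) \,|\, f(x) \le \bar{t}\}$ and the map $f$, obtaining a smooth semi-algebraic curve $\varphi \colon (0,\epsilon) \to \mathbb{R}^n$ with $\varphi(s) \in A$, $\|\varphi(s)\| \to +\infty$, and $f(\varphi(s)) \to t$ as $s \to 0^+$. Since $\varphi(s) \in \Gamma(f)$, I select semi-algebraic functions $\lambda_i(s), \mu(s)$, not all zero, with $\sum_{i=1}^m \lambda_i(s)\nabla f_i(\varphi(s)) + \mu(s)\varphi(s) = 0$; because $\varphi(s) \ne 0$ for small $s$, any nontrivial solution has $\lambda(s) \ne 0$ (otherwise $\mu(s)\varphi(s) = 0$ forces $\mu(s) = 0$ as well), so I may normalize $\sum_i |\lambda_i(s)| = 1$. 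The multiplier relation gives $\nu_f(\varphi(s)) \le \|\sum_i \lambda_i(s)\nabla f_i(\varphi(s))\| = |\mu(s)|\,\|\varphi(s)\|$, so it suffices to show $|\mu(s)|\,\|\varphi(s)\|^2 \to 0$.

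Writing $\psi(s) := \|\varphi(s)\|^2$ and $g_i(s) := f_i(\varphi(s))$ and pairing the multiplier relation with $\varphi'(s)$, the chain rule yields $\mu(s)\psi'(s)/2 = -\sum_i \lambda_i(s)g_i'(s)$, hence $|\mu(s)|\psi(s) \le 2\,\frac{\psi(s)}{|\psi'(s)|}\sum_i |g_i'(s)|$ using $\sum_i|\lambda_i(s)| = 1$. The endgame is an order-of-vanishing estimate via the Growth Dichotomy Lemma (Lemma~\ref{GrowthDichotomyLemma}): since $\psi(s) \to +\infty$ one has $\psi(s) = a\,s^{-2\beta} + \cdots$ with $a,\beta > 0$, so $\psi(s)/|\psi'(s)| = O(s)$, while each nonconstant $g_i$ satisfies $g_i(s) - t_i = c_i s^{\gamma_i} + \cdots$ with $\gamma_i > 0$, giving $|g_i'(s)| = O(s^{\gamma_i - 1})$; therefore $\frac{\psi(s)}{|\psi'(s)|}|g_i'(s)| = O(s^{\gamma_i}) \to 0$ for each $i$ (the constant terms contribute $0$), so the finite sum tends to $0$. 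Thus $\|\varphi(s)\|\nu_f(\varphi(s)) \le |\mu(s)|\psi(s) \to 0$, and evaluating along $s_k \to 0^+$ shows $t \in K_{\infty, \le \bar{t}}(f)$. This asymptotic step, together with producing the multipliers as semi-algebraic functions of the parameter, is the technical heart of the argument and the step I expect to be the main obstacle.

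Finally, for the equalities when $n \le m$, I observe that $\Gamma(f) = \mathbb{R}^n$: the defining condition requires the $m+1$ vectors $\nabla f_1(x), \ldots, \nabla f_m(x), x$ in $\mathbb{R}^n$ to be linearly dependent, and since $m + 1 > n$ this holds at every $x$. Consequently $T_{\infty, \le \bar{t}}(f)$ reduces to $\{t \,|\, \exists\, \{x^k\},\ f(x^k) \le \bar{t},\ \|x^k\| \to +\infty,\ f(x^k) \to t\}$. As $\widetilde{K}_{\infty, \le \bar{t}}(f)$ arises from this very description by imposing the additional requirement $\nu_f(x^k) \to 0$, it is contained in $T_{\infty, \le \bar{t}}(f)$; combined with the two inclusions already established, all three sets coincide, completing the proof.
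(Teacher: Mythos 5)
Your proof is correct and follows essentially the same route as the paper: the Curve Selection Lemma at infinity, the identity $\tfrac{\mu}{2}\tfrac{d}{ds}\|\varphi(s)\|^2=-\sum_i\lambda_i(s)\tfrac{d}{ds}(f_i\circ\varphi)(s)$, and the Growth Dichotomy Lemma to force $\|\varphi\|\,\nu_f(\varphi)\to 0$; your cruder bound $|\lambda_i(s)|\le 1$ even lets you skip the paper's case split on the index set $I$ of nonvanishing terms. The one step you rightly flag --- producing the multipliers $\lambda_i(s),\mu(s)$ as semi-algebraic functions of the curve parameter --- is handled in the paper by applying the Curve Selection Lemma not to $\Gamma(f)$ itself but to the lifted semi-algebraic set $\mathscr{A}=\{(x,\lambda,\mu)\,|\,f(x)\le\bar t,\ \sum_i\lambda_i\nabla f_i(x)+\mu x=0,\ \|(\lambda,\mu)\|=1\}$ with the map $(x,\lambda,\mu)\mapsto f(x)$, so the curve comes equipped with semi-algebraic multipliers and no separate definable-choice argument is needed.
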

\begin{proof}
The second inclusion is immediate from the definitions.
										
To prove the first inclusion, take any $t \in T_{\infty, \le \bar{t}} (f).$ By definition, there exist sequences $\{x^k\} \subset \mathbb{R}^n$ and $\{(\lambda^k, \mu^k)\} \subset \left(\mathbb{R}^m \times  \mathbb{R}\right) \setminus \{0\}$ such that
\begin{eqnarray*}
&&\lim_{k \to \infty}  \|x^k\| = +\infty,  \quad \lim_{k \to \infty}  f(x^k) = t, \quad f(x^k) \le \bar{t}, \quad \sum_{i = 1}^m \lambda^k_i \nabla f_i(x^k) + \mu^k x^k = 0
\end{eqnarray*}
We can assume, after a scaling if necessary, that $\|(\lambda^k, \mu^k)\| = 1$ for all $k\in\mathbb{N}$.
										
Let
\begin{eqnarray*}
\mathscr{A}:=\big\{ (x, \lambda, \mu) \in \mathbb{R}^n \times \mathbb{R}^{m} \times \mathbb{R}
| f(x) \le \bar{t}, \sum_{i = 1}^m \lambda_i \nabla f_i(x) + \mu x = 0, \|(\lambda, \mu)\| = 1 \big\}.
\end{eqnarray*}
Then $\mathscr{A}$ is a semi-algebraic set and the sequence $(x^k, \lambda^k, \mu^k) \in \mathscr{A}$ tends to infinity as $k \to \infty.$ By applying Lemma~\ref{CurveSelectionLemma} for the semi-algebraic map $\mathscr{A} \rightarrow \mathbb{R}^m, (x, \lambda, \mu)  \mapsto f(x),$ 
we get a smooth semi-algebraic curve 
$$(\varphi, \lambda, \mu) \colon (0, \epsilon) \rightarrow \mathbb{R}^n \times \mathbb{R}^{m} \times \mathbb{R}, \quad \tau \mapsto (\varphi(\tau), \lambda(\tau), \mu(\tau)),$$ 
satisfying the following conditions
\begin{enumerate}
\item [{(a)}] $\lim_{\tau \to 0^+}  \|\varphi(\tau)\|  = +\infty;$ 
\item [{(b)}] $\lim_{\tau \to 0^+}  f(\varphi(\tau)) =   t;$
\item [{(c)}] $f(\varphi(\tau)) \le \bar{t};$
\item [{(d)}] $\sum_{i = 1}^m \lambda_i(\tau) \nabla f_i(\varphi(\tau)) + \mu(\tau) \varphi(\tau) \equiv 0;$
\item [{(e)}] $\|(\lambda(\tau), \mu(\tau))\| \equiv 1.$
\end{enumerate}
										
Since the (smooth) functions $\lambda_i, \mu,$ and $f_i \circ \varphi$ are semi-algebraic, we can assume, by shrinking $\epsilon$ if necessary, that these functions are either constant or strictly monotone (see Lemma~\ref{MonotonicityLemma}). 
										
It follows from~(d) that
\begin{eqnarray*}
\frac{\mu (\tau)}{2} \frac{d \|\varphi(\tau)\|^2}{d\tau}
&=& \mu (\tau) \left \langle \varphi(\tau), \frac{d \varphi(\tau)}{d\tau} \right \rangle \\
&=& - \sum_{i = 1}^m \lambda_i(\tau) \left \langle \nabla  f_i(\varphi(\tau)), \frac{d \varphi(\tau)}{d\tau} \right \rangle \\
&=& - \sum_{i = 1}^m \lambda_i(\tau) \frac{d}{d\tau}(f_i \circ \varphi)(\tau).
\end{eqnarray*}
Let $I := \{i \in \{1, \ldots, m\} \ | \  \lambda_i (\tau) \frac{d}{d \tau} (f_i \circ \varphi )(\tau) \not \equiv 0\}.$ Then
\begin{eqnarray}\label{EqnBS17}
\frac{\mu (\tau)}{2} \frac{d \|\varphi(\tau)\|^2}{d\tau}
	&=& - \sum_{i \in I} \lambda_i(\tau) \frac{d}{d\tau}(f_i \circ \varphi)(\tau).
\end{eqnarray}
										
Assume that $I = \emptyset.$ From (a) and \eqref{EqnBS17} we have $\mu(\tau) \equiv 0.$ By (d), hence $\nu_f(\varphi(\tau)) \equiv 0,$ which together with (a)-(c), yields $t \in K_{\infty, \le \bar{t}}(f).$ 
										
We now assume that $I \ne \emptyset.$  For each $i \in I,$ we have $\lambda_i(\tau)  \not \equiv 0 $ and $f_i \circ \varphi (\tau)  \not \equiv t_i.$ By Lemma~\ref{GrowthDichotomyLemma}, we may write
\begin{eqnarray*}
\lambda_i(\tau) & = & a_i \tau^{\alpha_i} + \textrm{higher order terms in } \tau,\\
f_i \circ \varphi (\tau) & = & t_i + b_i \tau^{\beta_i} + \textrm{higher order terms in } \tau,
\end{eqnarray*}
where $a_i \ne 0, b_i \ne 0$ and $\alpha_i, \beta_i \in \mathbb{Q}.$ By Conditions (e) and (b) respectively, we have $\alpha_i \ge 0$ and $\beta_i > 0.$ In particular, $\theta := \min_{i \in I} (\alpha_i + \beta_i) > 0.$ 
										
On the other hand, from (d) and \eqref{EqnBS17}, we have
\begin{eqnarray*}
\frac{\left \| \displaystyle \sum_{i = 1}^m \lambda_i(\tau) \nabla f_i(\varphi(\tau))  \right  \|}{2\| \varphi(\tau)\|} \left | \frac{d \|\varphi(\tau)\|^2}{d \tau} \right |
	&=& \left | \sum_{i \in I} \lambda_i(\tau) \frac{d}{d\tau}(f_i \circ \varphi)(\tau) \right |.
\end{eqnarray*}
Note that asymptotically as $\tau \to 0^+,$ 
\begin{eqnarray*}
\|\varphi(\tau)\|^2 & \simeq & \tau \frac{d \|\varphi(\tau)\|^2}{d\tau} .
\end{eqnarray*}
Therefore, 
\begin{eqnarray*}
\|\varphi(\tau)\| \left\|\sum_{i = 1}^m \lambda_i(\tau) \nabla f_i(\varphi(\tau)) \right\| 
	&\simeq& \frac{\left \| \displaystyle \sum_{i = 1}^m \lambda_i(\tau) \nabla f_i(\varphi(\tau))  \right  \|}{2\| \varphi(\tau)\|} \left | \tau  \frac{d \|\varphi(\tau)\|^2}{d \tau} \right | \\
&=& \left | \sum_{i \in I} \lambda_i(\tau) \tau  \frac{d}{d\tau}(f_i \circ \varphi)(\tau) \right | \\
&=& c \tau^{\theta}   + \textrm{higher order terms in } \tau,
\end{eqnarray*}
for some constant $c \ge 0.$ Since $\theta > 0,$ we have
\begin{eqnarray*}
\lim_{t \to 0^+} \|\varphi(\tau)\| \left  \|\sum_{i = 1}^m  \lambda_i(\tau) \nabla f_i(\varphi(\tau)) \right\| &=& 0.
\end{eqnarray*}
Combining this with (a)-(c) one gets $t \in K_{\infty, \le \bar{t}}(f),$ thus ending the proof of the first part of our statement.
										
We now assume that $n \le m.$ By definition, $\Gamma(f) = \mathbb{R}^n,$ and so $$T_{\infty, \le {\bar{t}}}(f) \supset \widetilde{K}_{\infty, \le \bar{t}}(f).$$ 
This, together with proven inclusions, gives the following equalities:
\begin{equation*} 
T_{\infty, \le {\bar{t}}}(f)  = K_{\infty, \le {\bar{t}}}(f) = \widetilde{K}_{\infty, \le \bar{t}}(f).
\end{equation*}
 $\hfill\Box$
\end{proof}

\begin{remark}{\rm
(i) The first inclusion in Proposition~\ref{Proposition32} may be strict. For example, consider a class of polynomial functions defined by 
$$f_{nq}\colon \mathbb{R}^3\to\mathbb{R}, \quad (x_1, x_2, x_3)\mapsto x_1-3x_1^{2n+1}x_2^{2q}+2x_1^{3n+1}x_2^{3q}+x_2x_3,$$
where $n, q\in\mathbb{N}\setminus\{0\}$. By a similar argument as in \cite{pau97}, we can show that $T_{\infty}(f_{nq}) = \emptyset$ and that $K_{\infty}(f_{nq}) = \emptyset$ if, and only if, $n \leq q$. For $n>q$ we therefore get $T_{\infty}(f_{nq}) \varsubsetneq K_{\infty}(f_{nq}) \ne \emptyset.$

(ii) According to \cite[Lemma 3.5]{kurdyka00} (see also \cite[Theorem~2]{Ioffe2007} and \cite[Theorem~6.4]{Jelonek2002}), we have 
$$\dim K_{\infty, \le {\bar{t}}}(f) \le \dim K_{\infty}(f) < m.$$
											
On the other hand, without some extra hypothesis the set $\widetilde{K}_{\infty, \le \bar{t}}(f)$ may be quite large in the sense that $\dim \widetilde{K}_{\infty, \le \bar{t}}(f) = m.$ For example, let $f\colon\mathbb{R}^3\to\mathbb{R}$ be the polynomial defined by  $f(x_1, x_2, x_3):= x_1+x_1^2x_2+x_1^4x_2x_3.$ Then it is not hard to check that $\widetilde{K}_{\infty}(f)=\mathbb{R}$ (see \cite[Example~2.1]{kurdyka00}), and hence   
	$$\dim {T}_{\infty}(f)=0<1=\dim\widetilde{K}_{\infty}(f).$$ 
}\end{remark}

\begin{definition}{\rm 
Let $A$ be a subset in $\mathbb{R}^m$ and ${\bar{t}} \in \mathbb{R}^m$. The set $A \cap ({\bar{t}} - \mathbb{R}^m_+)$ is called a {\em section} of $A$ at ${\bar{t}}$ and denoted by $[A]_{\bar{t}}.$ The section $[A]_{\bar{t}}$ is said to be {\em bounded} if, and only if, there is $a\in\mathbb{R}^m$ such that
$$[A]_{\bar{t}} \subset a + \mathbb{R}^m_+.$$ 
}\end{definition}
											
\begin{remark}{\rm 
\begin{enumerate}
\item [{\rm (i)}] Let $f\colon \mathbb{R}^n\to\mathbb{R}^m$ be a map. Clearly, if the problem~\eqref{problem} admits a Pareto solution, say $\bar x$, then the section $[f(\mathbb{R}^n)]_{f(\bar x)}=\{f(\bar x)\}$ is bounded. Thus the condition that $f(\mathbb{R}^n)$ has at least one bounded section is a necessary one for the existence of Pareto solutions of \eqref{problem}.  
													
\item [{\rm (ii)}] By definition, the section $[f(\mathbb{R}^n)]_{\bar{t}}$ is bounded if, and only if, for each sequence $\{x^k\}\subset\mathbb{R}^n$ with $f(x^k)\leq {\bar{t}}$, we have $\{f(x^k)\}$ possesses  a convergent subsequence.
													
\item [{\rm (iii)}] By definition, we have for all $\bar{t} \in (\mathbb{R} \cup \{+ \infty\})^m,$ 
$$\widetilde{K}_{\infty, \le {\bar{t}}}(f)  \subset [\widetilde{K}_{\infty}(f)]_{\bar{t}}, \quad 
{K}_{\infty, \le {\bar{t}}}(f) \subset [{K}_{\infty}(f) ]_{\bar{t}}, \quad 
T_{\infty, \le {\bar{t}}}(f)  \subset [T_{\infty}(f)]_{\bar{t}}.$$
These inclusions may be strict as shown in the following example.
\end{enumerate}
}\end{remark}
												
\begin{example}{\rm
Let $f(x_1, x_2) := (x_1x_2 - 1)^2 + x_1^2$ be a polynomial function in two variables $x_1, x_2.$ We have $f$ is strictly positive on $\mathbb{R}^2$ and so
$$\widetilde{K}_{\infty, \le 0}(f)  = {K}_{\infty, \le 0}(f) = T_{\infty, \le 0}(f)  = \emptyset.$$
														
On the other hand, it is not hard to check that
$$\widetilde{K}_{\infty}(f) = {K}_{\infty}(f)  =  T_\infty(f) = \{0\}.$$
Consequently,
$$[\widetilde{K}_{\infty}(f)]_0 = [{K}_{\infty}(f) ]_0 =  [T_\infty(f)]_0 = \{0\}.$$
}\end{example}
													
\begin{definition}{\rm 
Let $f\colon \mathbb{R}^n\to\mathbb{R}^m$ be a map. We say that:
\begin{enumerate}
	 														
\item [{\rm (i)}] $f$  is  {\em proper at a sublevel} ${\bar{t}} \in\mathbb{R}^m$ if for each compact subset $A\subset [\mathbb{R}^m]_{\bar{t}}$, the inverse image $f^{-1}(A)$ is also compact;
															
\item [{\rm (ii)}] $f$  is {\em proper} if it is proper at every sublevel ${\bar{t}} \in\mathbb{R}^m.$
\end{enumerate}
}\end{definition}
														
\begin{remark}{\rm
By definition, $f$  is proper if, and only if, for each compact subset $A\subset \mathbb{R}^m$, the inverse image $f^{-1}(A)$ is also compact.
}\end{remark}
															
By definition, it is clear that if $f$ is proper at the sublevel ${\bar{t}},$ then 
$$\widetilde{K}_{\infty, \le {\bar{t}}}(f)={K}_{\infty, \le {\bar{t}}}(f)= T_{\infty, \le {\bar{t}}}(f) = \emptyset.$$
The converse does not hold. For example, let $f\colon \mathbb{R}^2\to\mathbb{R}$ be the function defined by $f(x_1, x_2) := x_1 + x_2.$ We see that 
$$\widetilde{K}_{\infty, \le {\bar{t}}}(f)={K}_{\infty, \le {\bar{t}}}(f)= T_{\infty, \le {\bar{t}}}(f)=\emptyset$$
for all ${\bar{t}} \in \mathbb{R}$ but  $f$ is not proper at every sublevel. However, we have  the following result.
															
\begin{theorem}\label{Theorem32}
Let $f\colon\mathbb{R}^n\to \mathbb{R}^m$ be a polynomial map. Assume that there exists $\bar t\in f(\mathbb{R}^n)$ such that the section $[f(\mathbb{R}^n)]_{\bar t}$ is bounded.  Then the following statements are equivalent:
\begin{enumerate}
\item [\rm (i)]  $f$ is proper at the sublevel $\bar t$.
																	
\item[\rm (ii)] $f$ satisfies the Palais--Smale condition at the sublevel $\bar{t}$: $ \widetilde{K}_{\infty, \le \bar{t}}(f)=\emptyset.$
																	
\item[\rm (iii)] $f$ satisfies the weak Palais--Smale condition at the sublevel $\bar{t}$:  ${K}_{\infty, \le \bar{t}}(f)=\emptyset$.
																	
\item[\rm (iv)] $f$ is $M$-tame\footnote{This definition is inspired from the one given in \cite{Nemethi1990}.} at the sublevel $\bar{t}$: $T_{\infty, \le \bar{t}}(f)=\emptyset$.
\end{enumerate}
Furthermore, the set $[f(\mathbb{R}^n)]_{\bar t}$ is closed provided that one of the above equivalent conditions is satisfied.
\end{theorem}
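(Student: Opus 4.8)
The plan is to prove the cycle of implications $(i)\Rightarrow(ii)\Rightarrow(iii)\Rightarrow(iv)\Rightarrow(i)$ and then to deduce the closedness of $[f(\mathbb{R}^n)]_{\bar t}$ from properness. The implications $(i)\Rightarrow(ii)\Rightarrow(iii)\Rightarrow(iv)$ are the routine part. For $(i)\Rightarrow(ii)$ I would argue by contradiction: if some $t$ lay in $\widetilde{K}_{\infty,\le\bar t}(f)$, witnessed by a sequence $x^k$ with $\|x^k\|\to\infty$, $f(x^k)\le\bar t$ and $f(x^k)\to t$, then $A:=\{f(x^k)\mid k\in\mathbb{N}\}\cup\{t\}$ would be a compact subset of $[\mathbb{R}^m]_{\bar t}$ whose preimage $f^{-1}(A)$ contains the unbounded sequence $\{x^k\}$, contradicting properness at $\bar t$; this is exactly the observation recorded just before the theorem. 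The implications $(ii)\Rightarrow(iii)\Rightarrow(iv)$ are immediate from the inclusions $T_{\infty,\le\bar t}(f)\subset K_{\infty,\le\bar t}(f)\subset\widetilde{K}_{\infty,\le\bar t}(f)$ of Proposition~\ref{Proposition32}, since a smaller set is empty whenever a larger one is.

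The substance of the argument is the implication $(iv)\Rightarrow(i)$, which I would prove in contrapositive form: assuming $f$ is not proper at $\bar t$, I will exhibit a point of $T_{\infty,\le\bar t}(f)$. Non-properness yields a compact set $A\subset[\mathbb{R}^m]_{\bar t}$ with $f^{-1}(A)$ non-compact; since $f^{-1}(A)$ is closed it must be unbounded, so there is a sequence $x^k\in f^{-1}(A)$ with $R_k:=\|x^k\|\to\infty$ and $f(x^k)\le\bar t$. The key construction is an auxiliary optimization that forces membership in the tangency variety: for each $k$ I minimize the linear functional $x\mapsto\sum_{i=1}^m f_i(x)$ over the nonempty compact set $S_{R_k}\cap W$, where $W:=\{x\in\mathbb{R}^n\mid f(x)\le\bar t\}$ and $S_{R_k}:=\{x\mid\|x\|=R_k\}$, nonempty because $x^k$ belongs to it. Writing $z^k$ for a minimizer, the Fritz--John conditions give multipliers $\kappa\ge0$, $\mu\in\mathbb{R}$ and $\beta_i\ge0$, not all zero, with $\sum_{i=1}^m(\kappa+\beta_i)\nabla f_i(z^k)+2\mu z^k=0$; the vector $(\kappa+\beta_1,\ldots,\kappa+\beta_m,2\mu)$ cannot vanish, since otherwise $\kappa=\beta_1=\cdots=\beta_m=\mu=0$, so $z^k\in\Gamma(f)$.

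It then remains to control the images. Since $z^k\in W$ we have $f(z^k)\le\bar t$, and because the section $[f(\mathbb{R}^n)]_{\bar t}$ is bounded there is $a\in\mathbb{R}^m$ with $a\le f(z^k)\le\bar t$ for all $k$; hence $\{f(z^k)\}$ lies in a compact box and, after passing to a subsequence, converges to some $t^*\le\bar t$. Combined with $z^k\in\Gamma(f)$, $\|z^k\|=R_k\to\infty$ and $f(z^k)\le\bar t$, this gives $t^*\in T_{\infty,\le\bar t}(f)$, so the latter is nonempty, as required. I expect the main obstacle to be precisely this step: choosing an auxiliary objective whose constrained critical points are guaranteed to lie in $\Gamma(f)$ while the constraint $f\le\bar t$ keeps the images bounded. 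Minimizing $\|f(x)-t\|^2$ over spheres, as in the proof of Theorem~\ref{Theorem31}, is tempting but can fail here, since an unbounded fibre $f^{-1}(t)$ forces every minimizer to satisfy $f(z^k)=t$ with no control over membership in $\Gamma(f)$; replacing the quadratic target by a linear functional with positive coefficients is what makes the Fritz--John multipliers non-degenerate and lands $z^k$ in the tangency variety.

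Finally, for the closedness of $[f(\mathbb{R}^n)]_{\bar t}$ under any, hence all, of the equivalent conditions, I would invoke properness directly. Let $t\in\mathrm{cl}\,[f(\mathbb{R}^n)]_{\bar t}$; then $t\le\bar t$ and there are points $x^k$ with $f(x^k)\le\bar t$ and $f(x^k)\to t$. The set $A:=\{f(x^k)\mid k\in\mathbb{N}\}\cup\{t\}$ is compact and contained in $[\mathbb{R}^m]_{\bar t}$, so $f^{-1}(A)$ is compact and $\{x^k\}$ has an accumulation point $x^*$ with $f(x^*)=t\le\bar t$. Therefore $t=f(x^*)\in[f(\mathbb{R}^n)]_{\bar t}$, which shows the section is closed.
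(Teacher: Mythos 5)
Your proposal is correct and takes essentially the same route as the paper: for (iv)$\Rightarrow$(i) the paper likewise picks an unbounded sequence in $f^{-1}(A)$, optimizes over the compact sets $\{x \mid f(x)\le\bar t,\ \|x\|^2=\|x^k\|^2\}$, and extracts Fritz--John multipliers of the form $\alpha_i+\beta_i$ (not all zero together with $\mu$) that place the minimizers in $\Gamma(f)$, after which boundedness of the section yields a point of $T_{\infty,\le\bar t}(f)$; your closedness argument also matches the paper's. The only cosmetic difference is that the paper takes a Pareto solution of the vector problem on that feasible set and invokes the vector Fritz--John conditions, whereas you scalarize with $\sum_{i=1}^m f_i$ and use the scalar version --- both produce the identical multiplier identity, and your observation about why minimizing $\|f(x)-t\|^2$ would not work here is a sound reading of why Theorem~\ref{Theorem31}'s auxiliary problem cannot be reused.
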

\begin{proof}
The implications (i)$\Rightarrow$(ii)$\Rightarrow$(iii) are immediate from the definitions.

(iii)$\Rightarrow$(iv): This follows from  Proposition~\ref{Proposition32}.

(iv)$\Rightarrow$(i): Arguing by contradiction, assume that $f$ is not proper at the sublevel $\bar t$. Then there exists a compact set $A\subset [\mathbb{R}^m]_{\bar t}$ such that $f^{-1}(A)$ is a non-compact subset in $\mathbb{R}^n$. By the continuity of $f$, the set $f^{-1}(A)$ is unbounded. Thus there exists a sequence $\{x^k\}\subset f^{-1}(A)$ satisfying $\displaystyle\lim_{k\to\infty}\|x^k\|=+\infty$. Since $\{f(x^k)\}\subset A$, we have
$$f(x^k)\leq \bar t\,\,\,\mbox{for all }\,\, k\in\mathbb{N}.$$
For each $k\in\mathbb{N}$, we  consider the problem
\[
\text{\rm Min}_{\,\mathbb{R}^m_+} \{f(x) \,|\, x\in\mathbb{R}^n,  f(x)\leq \bar t\, \, {\rm and } \,\, \|x\|^2=\|x^k\|^2\, \}. 
\]
Since  $\{x\in\mathbb{R}^n \, | \,  f(x)\leq \bar t\, \, {\rm and } \,\, \|x\|^2=\|x^k\|^2\,\}$ is a nonempty compact subset of $\mathbb{R}^n$ and the objective function $f$ is continuous,  the   problem admits a Pareto solution, say  $y^k$. By the Fritz-John optimality conditions \cite[Theorem~7.4]{jahn04}, there are $(\alpha, \beta, \gamma)\in \left(\mathbb{R}^m_+\times\mathbb{R}^m_+\times\mathbb{R}\right) \setminus \{0\}$ such that
$$\sum_{i=1}^m\alpha_i\nabla f_i(y^k)+\sum_{i=1}^m \beta_i\nabla (f_i(\cdot)-\bar t_i)(y^k)+2\gamma y^k=0$$
or, equivalently,
\begin{equation*}
\sum_{i=1}^m(\alpha_i+\beta_i)\nabla f_i(y^k)+2\gamma y^k=0.
\end{equation*}
Put $\lambda_i := \alpha_i + \beta_i$ for $i := 1, \ldots, m$, and $\mu=2\gamma.$ We have
$$\sum_{i=1}^m\lambda_i\nabla f_i(y^k)+\mu y^k=0.$$
Since $(\alpha, \beta, \gamma)\in \left(\mathbb{R}^m_+\times\mathbb{R}^m_+\times\mathbb{R}\right)\setminus \{0\}$, it holds that $(\lambda_1, \ldots, \lambda_m, \mu)\neq 0,$ and so $y^k\in \Gamma (f)$. 
																
We therefore see that the sequence $\{y^k\}$ has the following properties:
\begin{enumerate}
\item [(a)] $\{y^k\}\subset\Gamma (f)$, 
																	
\item[(b)] $\|y^k\|=\|x^k\|\to +\infty$ as $k\to\infty$, and 
																	
\item[(c)] $f(y^k)\leq \bar t$ for all $k\in \mathbb{N}$.	
\end{enumerate}
Now the assumption that $[f(\mathbb{R}^n)]_{\bar t}$ is bounded implies that the sequence $\{f(y^k)\}$ has an accumulation point, say $t\in\mathbb{R}^m$. Clearly, $t\leq \bar t$. Thus $t\in T_{\infty, \le \bar t}(f)$, a contradiction.
																
We now assume that the condition (i) holds. To prove the set $[f(\mathbb{R}^n)]_{\bar t}$ is closed, we need to show that it contains all its limit points. Indeed, let $\{t^k\}\subset [f(\mathbb{R}^n)]_{\bar t}$ be an arbitrary sequence which converges to $t\in\mathbb{R}^m$. Then there exists a sequence $\{x^k\}\subset \mathbb{R}^n$ such that $f(x^k) = t^k \le \bar{t}$ for all $k\in\mathbb{N}$. Since $\displaystyle\lim_{k\to\infty}f(x^k)=t$, there exists a compact set $A\subset\mathbb{R}^m$ such that $\{f(x^k)\}\subset A$. Clearly, the set $A\cap [\mathbb{R}^m]_{\bar t}$ is compact, and so is $f^{-1}\left(A\cap [\mathbb{R}^m]_{\bar t}\right)$ because $f$ is proper at the sublevel $\bar t.$ It follows that the sequence $\{x^k\} \subset f^{-1}\left(A\cap [\mathbb{R}^m]_{\bar t}\right)$ has an accumulation point, say $\bar x\in\mathbb{R}^n$. By the continuity of $f$ and the fact that $\displaystyle\lim_{k\to\infty}f(x^k)=t$, one has $f(\bar x)=t$. Consequently, $t\in f(\mathbb{R}^n)$. Note that $t \le {\bar t}$. Therefore $t\in [f(\mathbb{R}^n)]_{\bar t}$, as required.  $\hfill\Box$
\end{proof} 
															
\section{Existence of Pareto solutions}\label{Section4}
															
The following result concerns the {\em existence} of Pareto solutions for polynomial vector optimization problems.
To the best of our knowledge, the result is new even in the case $m = 1.$ 
															
\begin{theorem}\label{Theorem41}
Let $f\colon\mathbb{R}^n\to \mathbb{R}^m$ be a polynomial map. Assume that there exists $\bar t\in f(\mathbb{R}^n)$ such that the section $[f(\mathbb{R}^n)]_{\bar t}$ is bounded. Then the problem \eqref{problem} admits a Pareto solution, if one of the following equivalent conditions holds:
\begin{enumerate}
\item [\rm (i)]  $f$ is proper at the sublevel $\bar t$.
																	
\item[\rm (ii)] $f$ satisfies the Palais--Smale condition at the sublevel $\bar{t}$: $ \widetilde{K}_{\infty, \le \bar{t}}(f)=\emptyset.$
																	
\item[\rm (iii)] $f$ satisfies the weak Palais--Smale condition at the sublevel $\bar{t}$:  ${K}_{\infty, \le \bar{t}}(f)=\emptyset$.
																	
\item[\rm (iv)] $f$ is $M$-tame at the sublevel $\bar{t}$: $T_{\infty, \le \bar{t}}(f)=\emptyset$.
\end{enumerate}
\end{theorem}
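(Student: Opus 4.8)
The plan is to invoke the equivalence already established in Theorem~\ref{Theorem32} so as to work under whichever of the four hypotheses is most convenient, namely properness at the sublevel $\bar t$ (condition (i)), and then to reduce the vector problem to a scalar one by a linear scalarization. First I would record that the section
$$S := [f(\mathbb{R}^n)]_{\bar t} = f(\mathbb{R}^n)\cap(\bar t - \mathbb{R}^m_+)$$
is nonempty: since $\bar t \in f(\mathbb{R}^n)$ and $\bar t \le \bar t$, we have $\bar t \in S$. By hypothesis the section is bounded, so there is $a\in\mathbb{R}^m$ with $a \le t \le \bar t$ for every $t\in S$, whence $S$ is contained in a compact box. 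Finally, the last assertion of Theorem~\ref{Theorem32} guarantees that $S$ is closed as soon as any one of the equivalent conditions (i)--(iv) holds. Therefore $S$ is a nonempty compact subset of $\mathbb{R}^m$.

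With $S$ compact, the core of the argument is scalarization. I would fix the strictly positive vector $e := (1,\ldots,1)\in\mathrm{int}\,\mathbb{R}^m_+$ and consider the continuous linear functional $t\mapsto\langle e, t\rangle$ on $S$. Being continuous on a compact set, it attains its minimum at some $t^*\in S$. Since $t^*\in S\subset f(\mathbb{R}^n)$, we may choose $x^*\in f^{-1}(t^*)$, and the claim to be verified is that $x^*$ is a Pareto solution of \eqref{problem}.

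It remains to check that $t^*$ is a Pareto value, i.e. that $f(x)\notin t^* - (\mathbb{R}^m_+\setminus\{0\})$ for every $x\in\mathbb{R}^n$. Arguing by contradiction, suppose some $x$ satisfies $f(x)\le t^*$ and $f(x)\ne t^*$. Then $f(x)\le t^*\le\bar t$, so $f(x)\in f(\mathbb{R}^n)\cap(\bar t-\mathbb{R}^m_+)=S$; moreover $t^*-f(x)\in\mathbb{R}^m_+\setminus\{0\}$, and since $e$ is strictly positive this yields $\langle e, f(x)\rangle < \langle e, t^*\rangle$, contradicting the minimality of $t^*$ over $S$. Hence $t^*$ is a Pareto value attained at $x^*$, so \eqref{problem} has a Pareto solution.

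The genuinely substantive inputs---the equivalence of (i)--(iv) and the closedness of $[f(\mathbb{R}^n)]_{\bar t}$---are furnished by Theorem~\ref{Theorem32}, so the remaining work is largely organizational. The one step to watch is the passage from minimality over the section $S$ to Pareto-minimality over the whole image $f(\mathbb{R}^n)$: mere boundedness of the section would leave the infimum possibly unattained, which is exactly where the closedness conclusion of Theorem~\ref{Theorem32} is indispensable, and the comparison argument closes only because any dominating value $f(x)\le t^*$ automatically satisfies $f(x)\le\bar t$ and therefore falls back inside the compact set $S$ on which we optimized, so no competitor can escape that region.
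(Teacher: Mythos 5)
Your proof is correct and follows the paper's argument almost step for step: both reduce to condition (i) via Theorem~\ref{Theorem32}, both observe that $[f(\mathbb{R}^n)]_{\bar t}$ is nonempty (it contains $\bar t$), bounded by hypothesis, and closed by the last assertion of Theorem~\ref{Theorem32}, hence compact. The only divergence is the final step: the paper simply cites Borwein's existence theorem for Pareto efficient points of a set with a nonempty compact section, whereas you prove that fact directly by minimizing the strictly positive linear functional $t \mapsto \langle e, t\rangle$ over the compact section and checking that any dominating value $f(x) \le t^*$ necessarily lies back in the section (since $t^* \le \bar t$) and would strictly decrease the functional. That scalarization argument is sound and is essentially the standard proof of the cited result, so your version is self-contained where the paper's is not; otherwise the two proofs are the same.
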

															
\begin{proof}
By Theorem~\ref{Theorem32}, it suffices  to assume that  $f$ is proper at the sublevel $\bar t$. We claim that $[f(\mathbb{R}^n)]_{\bar t}$ is a nonempty compact subset of $\mathbb{R}^m$.  Indeed, let $\{y^k\}$ be an arbitrary sequence in $[f(\mathbb{R}^n)]_{\bar t}$. Then there exists a sequence $\{x^k\}\subset\mathbb{R}^n$ such that $f(x^k) = y^k \leq \bar t$ for all $k\in\mathbb{N}.$
Since the section $[f(\mathbb{R}^n)]_{\bar t}$ is bounded, $\{f(x^k)\}$ has a convergent subsequence. On the other hand, $[f(\mathbb{R}^n)]_{\bar t}$ is a closed set in $\mathbb{R}^n$
due to Theorem~\ref{Theorem32}. Thus $[f(\mathbb{R}^n)]_{\bar t}$ is a nonempty compact set in $\mathbb{R}^n.$ Thanks to \cite[Theorem 1]{borwein83}, the set $f(\mathbb{R}^n)$ has at least one {\em Pareto efficient point}, i.e., there exists $t^* \in f(\mathbb{R}^n)$ such that $f(x) \notin t^* - (\mathbb{R}^m_+\setminus\{0\})$ for all $x\in\mathbb{R}^n$.  This means that the  problem~\eqref{problem} admits a Pareto solution.  The proof is complete.  $\hfill\Box$
\end{proof}
															
As a consequence of Theorem~\ref{Theorem41}, we obtain the following.
\begin{corollary}\label{Corollary41}
Let $f\colon\mathbb{R}^n\to \mathbb{R}^m$ be a polynomial  map such that the section $[f(\mathbb{R}^n)]_t$ is bounded for all $t\in\mathbb{R}^m.$ Then the problem~\eqref{problem} admits a Pareto solution, provided that one of the following equivalent conditions holds:
\begin{enumerate}
\item [\rm (i)]  $f$ is proper.
																	
\item[\rm (ii)] $f$ satisfies the Palais--Smale condition: $\widetilde{K}_{\infty}(f)=\emptyset.$
																	
\item[\rm (iii)] $f$ satisfies the weak Palais--Smale condition: ${K}_{\infty}(f)=\emptyset.$
																	
\item[\rm (iv)] $f$ is $M$-tame: $T_{\infty}(f)=\emptyset.$
\end{enumerate}
\end{corollary}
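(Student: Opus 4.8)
The plan is to deduce everything from Theorem~\ref{Theorem41}, since the hypotheses here are the global counterparts of the sublevel hypotheses there. First I would observe that $f(\mathbb{R}^n)$ is nonempty (it contains $f(0)$), so I may fix some $\bar t\in f(\mathbb{R}^n)$; by assumption $[f(\mathbb{R}^n)]_{\bar t}$ is bounded, which is precisely the standing hypothesis of Theorem~\ref{Theorem41}. It then suffices to check that each global condition forces its sublevel analogue at this particular $\bar t$: properness of $f$ gives properness at the sublevel $\bar t$ by definition, while $\widetilde K_\infty(f)=\emptyset$ (resp. $K_\infty(f)=\emptyset$, $T_\infty(f)=\emptyset$) yields the corresponding emptiness at $\bar t$ through the inclusions $\widetilde K_{\infty,\le\bar t}(f)\subset[\widetilde K_\infty(f)]_{\bar t}$, $K_{\infty,\le\bar t}(f)\subset[K_\infty(f)]_{\bar t}$, $T_{\infty,\le\bar t}(f)\subset[T_\infty(f)]_{\bar t}$ recorded in the remark preceding the definition of properness. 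Invoking Theorem~\ref{Theorem41} then produces a Pareto solution, which settles the existence assertion.

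To justify that (i)--(iv) are indeed \emph{equivalent}, I would relate each global object to the whole family of sublevel objects. On one hand $f$ is proper exactly when it is proper at every sublevel $\bar t\in\mathbb{R}^m$. On the other hand one has $T_\infty(f)=\bigcup_{\bar t\in\mathbb{R}^m}T_{\infty,\le\bar t}(f)$, and likewise for $K_\infty(f)$ and $\widetilde K_\infty(f)$: any witnessing sequence with $f(x^k)\to t\in\mathbb{R}^m$ satisfies $f(x^k)\le t+(1,\dots,1)$ for all large $k$, so $t$ is already captured at a finite sublevel. Hence each of (i)--(iv) is equivalent to its sublevel version holding simultaneously for every $\bar t\in\mathbb{R}^m$. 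Since $[f(\mathbb{R}^n)]_{\bar t}$ is bounded for every such $\bar t$, Theorem~\ref{Theorem32} gives the equivalence of the four sublevel conditions at each fixed $\bar t$, and quantifying over $\bar t$ transfers this to the global statements.

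The hard part will be the implication (iv)$\Rightarrow$(i), namely that $T_\infty(f)=\emptyset$ forces properness of $f$; the other implications are either definitional or follow from the chain $T_\infty(f)\subset K_\infty(f)\subset\widetilde K_\infty(f)$ of Proposition~\ref{Proposition32} (taken at $\bar t=(+\infty,\dots,+\infty)$) together with the observation that properness forces $\widetilde K_\infty(f)=\bigcup_{\bar t}\widetilde K_{\infty,\le\bar t}(f)=\emptyset$. For (iv)$\Rightarrow$(i) I would reuse the argument of the implication (iv)$\Rightarrow$(i) in Theorem~\ref{Theorem32}: at a fixed sublevel $\bar t$, assuming $f^{-1}(A)$ is noncompact for some compact $A\subset \bar t-\mathbb{R}^m_+$ produces $\{x^k\}$ with $\|x^k\|\to\infty$ and $f(x^k)\le\bar t$, and the Fritz--John conditions for the spherically constrained problem on $\{x\mid\|x\|^2=\|x^k\|^2,\ f(x)\le\bar t\}$ yield points $y^k\in\Gamma(f)$ with $\|y^k\|\to\infty$ and $f(y^k)\le\bar t$; boundedness of $[f(\mathbb{R}^n)]_{\bar t}$ then forces an accumulation point of $\{f(y^k)\}$ to lie in $T_{\infty,\le\bar t}(f)$, contradicting $T_{\infty,\le\bar t}(f)\subset T_\infty(f)=\emptyset$. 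The one point to be careful about is that this must run for \emph{every} $\bar t\in\mathbb{R}^m$, not only for $\bar t\in f(\mathbb{R}^n)$: when $[f(\mathbb{R}^n)]_{\bar t}=\emptyset$ properness at $\bar t$ is automatic, since then $f^{-1}(A)=\emptyset$ for every compact $A\subset\bar t-\mathbb{R}^m_+$, and when it is nonempty the displayed argument applies verbatim because it uses only boundedness of the section and never the membership $\bar t\in f(\mathbb{R}^n)$.
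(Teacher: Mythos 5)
Your proposal is correct and follows the same route the paper intends: the paper gives no separate proof of Corollary~\ref{Corollary41}, presenting it as an immediate consequence of Theorem~\ref{Theorem41} via the inclusions of the sublevel sets in the global ones. Your extra care in justifying the equivalence of (i)--(iv) globally --- in particular noting that the argument of Theorem~\ref{Theorem32} never uses $\bar t\in f(\mathbb{R}^n)$ and that properness at a sublevel with empty section is automatic --- fills in a detail the paper leaves implicit, and is handled correctly.
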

															
\begin{remark}{\rm
H\`a \cite{ha06} obtained some results on the existence of weak Pareto solutions for multiobjective optimization problems, where the objective function is {\em bounded from below} and  satisfies the so-called {\em $(\mathrm{PS})_1$ condition.} More recently, using the so-called {\em quasiboundedness from below} and {\em refined subdifferential Palais--Smale condition} (RSPS for short), Bao and Mordukhovich \cite{bao07,bao10} studied the existence of relative Pareto solutions for multiobjective optimization problems. Note that the existence theorems established in the papers mentioned do not ensure the existence of Pareto solutions, but only of weak and relative ones.
																	
Regarding to Corollary~\ref{Corollary41} on the existence of {\em Pareto solutions} of the problem~\eqref{problem}, let us mention the following three remarks in comparison with previous results:
																	
\begin{itemize}
\item[$\bullet$] Since the interior of the cone $\mathbb{R}^m_+$ is not empty, all the three relative Pareto solutions introduced in \cite{bao10} agree and in fact they all are weak Pareto solutions. Hence, the results established in \cite{bao07,bao10,ha06} only ensure the existence of weak Pareto solutions.
																		
\item[$\bullet$] Recall that a map $f\colon \mathbb{R}^n\to\mathbb{R}^m$  is said to be {\em bounded from below} if there exists an element $a\in\mathbb{R}^m$ such that
$$f(\mathbb{R}^n)\subset a+\mathbb{R}^m_+.$$ 
Clearly, the map $f$ is bounded from below if, and only if, it is {\em quasibounded from below} (see \cite{bao07,bao10}) in the sense that there exists a bounded set $A\subset \mathbb{R}^m$ such that
$$f(\mathbb{R}^n)\subset A+\mathbb{R}^m_+.$$ 
Furthermore, it follows from definitions that if $f$ is bounded from below, then the section $[f(\mathbb{R}^n)]_t$ is bounded for all $t\in\mathbb{R}^m$. 
The converse is true in the case $m = 1$ but fails to hold in the general case. 
																		
\item[$\bullet$] Let $f \colon \mathbb{R}^n\to\mathbb{R}^m$ a differentiable map. By definition, we can check that the (PS)$_1$ condition\footnote{By a private communication \cite{ha2018}, we would like to note that in the definition of the function $\theta,$ which is used in the (PS)$_1$ condition, the closed unit ball should be replaced by the unit sphere.} (considered in \cite{ha06}) holds for $f$ is equivalent to the fact that $\widetilde{K}_{\infty}(f) = \emptyset,$ which means that
$f$ satisfies the Palais--Smale condition. On the other hand, the (RSPS) condition introduced in \cite{bao10} is stronger than the Palais--Smale condition. To see this, recall that the map $f$ satisfies the (RSPS) condition if every sequence $\{x^k\} \subset \mathbb{R}^n$ such that
$\nu_f(x^k) \to 0$ as $k \to \infty$ contains a convergent subsequence, provided that $\{f(x^k)\}$ is quasibounded from below, i.e.
$$\{f(x^k)\} \subset A + \mathbb{R}^m_+$$ 
for some bounded set $A\subset \mathbb{R}^m.$  By definition, if $f$ satisfies the (RSPS) condition, then it also satisfies the Palais--Smale condition, but the converse fails to hold as can be checked directly for the polynomial 
$$f\colon\mathbb{R}^2 \rightarrow \mathbb{R}^2, \quad (x_1, x_2) \mapsto f(x_1, x_2) := (x_1^2 + x_2^2, x_1^2 - x_2^2).$$
(This polynomial $f$ is proper, and so it satisfies the Palais--Smale condition; furthermore, we have
$$\nu_f(k, 0) = 0  \quad \textrm{ and } \quad f(k, 0) \in \{(0, 0)\} + \mathbb{R}^2_+ \quad  \textrm{ for all } k \in \mathbb{N},$$
which implies that $f$ does not satisfy the (RSPS) condition.)
\end{itemize}
According to the above discussions, it turns out that our results, in the polynomial setting, improve and extend \cite[Theorem 4.1]{ha06}, \cite[Theorem~4]{bao07} and \cite[Theorem~4.4]{bao10}.
}\end{remark}
																
Let us illustrate Theorem~\ref{Theorem41} and  Corollary~\ref{Corollary41} with some examples. 
																
\begin{example}\label{Example41} {\rm
Let us consider the Motzkin polynomial (see \cite{Motzkin1967,HaHV2017})
$$M(x_1, x_2) := x_1^2x_2^4 + x_1^4 x_2^2 - 3 x_1^2x_2^2 + 1.$$
It is not difficult to see that $M(x_1, x_2) \ge 0$ for all $x := (x_1, x_2) \in {\Bbb R}^2.$ Moreover, we have
\begin{itemize}
\item[$\bullet$] If $0 < t < 1,$ then $M^{-1}(t)$ is the union of 4 ovals.
\item[$\bullet$]If $1 < t,$ then $M^{-1}(t)$ is the union of 4 non-compact components.
\item[$\bullet$] The set $M^{-1}(1)$ is non-compact:
$$M^{-1}(1) = \{x_1 = 0 \} \cup \{x_2 = 0 \} \cup \{x_1^2 + x_2^2 = 3 \}.$$
\end{itemize}
Consequently, the polynomial $M$ is proper at the sublevel $\bar t$ if, and only if, $\bar{t} < 1.$ Thanks to Theorem~\ref{Theorem41}, $M$ attains its infimum on $\mathbb{R}^2.$ In fact, we can see that the set of optimal solutions of the problem $\inf_{x \in \mathbb{R}^2} M(x)$ is $M^{-1}(0) = \{(1, 1), (1, -1), (-1, 1), (-1, 1)\}.$ Note that $1 \in T_\infty(M)$ and hence, by Proposition~\ref{Proposition32}, $M$ does not satisfy the Palais--Smale and weak Palais--Smale conditions. Therefore, \cite[Theorem 4.1]{ha06}, \cite[Theorem~4]{bao07}, \cite[Theorem~4.4]{bao10}, and \cite[Theorem~2]{mauhin10}  cannot be applied for this example.
}\end{example}

\begin{example}{\rm  
Let $f\colon\mathbb{R}^3\to \mathbb{R}^2$ be the polynomial map defined by 
$$f(x_1, x_2, x_3)=(x_1^2+x_2^2+x_3^2, x_3^3).$$
It is not hard to see that $f$ is proper and $[f(\mathbb{R}^3)]_t$ is bounded for each $t\in\mathbb{R}^2.$ By Corollary~\ref{Corollary41}, the problem~\eqref{problem} has at least  one Pareto solution. On the other hand, $f$  is not bounded from below, and so, \cite[Theorem 4.1]{ha06}, \cite[Theorem 4]{bao07}, and \cite[Theorem~4.4]{bao10} cannot be applied for this example.
}\end{example} 
																		
The next example shows that if the objective function satisfies one of the equivalent conditions in Theorem~\ref{Theorem41} and $f(\mathbb{R}^n)$ has at least a bounded section, then the set of Pareto solutions of \eqref{problem} is nonempty.
																		
\begin{example}{\rm  
Consider the polynomial map 
$$f \colon\mathbb{R}^3 \to \mathbb{R}^3, \quad (x_1, x_2, x_3) \mapsto (x_1, x_2, M(x_1, x_2) + x_3^2),$$
where $M$ is the Motzkin polynomial defined in Example~\ref{Example41}.
We have 
$$f(\mathbb{R}^3) = \left \{t =(t_1, t_2, t_3) \in \mathbb{R}^3 \, | \, t_3 \ge M(t_1, t_2) \right\}$$
and the section $[f(\mathbb{R}^3)]_t$ is unbounded for every $t = (t_1, t_2, t_3) \in \mathbb{R}^3$ with $t_3 \ge 1.$ 
On the other hand, if we take $\bar{t} := (1, 1, 0) \in \mathbb{R}^3$ then the section $[f(\mathbb{R}^3)]_{\bar t}$ is bounded and $f$ is proper at the sublevel $\bar t.$
Thus, by Theorem~\ref{Theorem41}, the problem \eqref{problem} has at least  one Pareto solution. However, $f$  is not bounded from below, and so \cite[Theorem 4.1]{ha06}, \cite[Theorem 4]{bao07}, and \cite[Theorem~4.4]{bao10} cannot be applied for this example.
}\end{example} 
																			
In the rest of the paper we shall give some classes of vector optimization problems, which satisfy the conditions in Theorem \ref{Theorem41}. We start with the class of {\em linear vector optimization problems.}
																			
\begin{corollary}[{compare \cite[Theorem~6.5]{Ehrgott06}}]
Let $f_i(x) := \langle a_i, x\rangle +b_i$, where $a_i\in\mathbb{R}^n$ and   $b_i \in\mathbb{R}$ for all $i = 1, \ldots, m.$ 
Assume that the set of vectors $\{a_1, \ldots, a_m\}$ is linearly independent.  If $f(\mathbb{R}^n)$ has a bounded section, then \eqref{problem} admits a  Pareto solution. 
\end{corollary}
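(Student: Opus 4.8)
The plan is to verify condition~(ii) of Theorem~\ref{Theorem41} --- the Palais--Smale condition $\widetilde{K}_{\infty, \le \bar t}(f) = \emptyset$ --- and then invoke that theorem directly. The key observation is that, since each $f_i$ is affine, its gradient $\nabla f_i(x) = a_i$ is constant in $x$, so the Rabier function
\[
\nu_f(x) \;=\; \min_{\sum_{i=1}^m |\lambda_i| = 1} \left\| \sum_{i=1}^m \lambda_i a_i \right\| \;=:\; \delta
\]
does not depend on $x$ at all.

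First I would check that $\delta > 0$. The set $\{\lambda \in \mathbb{R}^m \mid \sum_{i=1}^m |\lambda_i| = 1\}$ is compact and $\lambda \mapsto \|\sum_i \lambda_i a_i\|$ is continuous, so the minimum $\delta$ is attained at some $\lambda^\ast$ with $\sum_i |\lambda^\ast_i| = 1$, in particular $\lambda^\ast \ne 0$. Were $\delta = 0$, we would have $\sum_i \lambda^\ast_i a_i = 0$ with $\lambda^\ast \ne 0$, contradicting the linear independence of $\{a_1, \ldots, a_m\}$; hence $\delta > 0$. Consequently $\nu_f(x) = \delta > 0$ for every $x \in \mathbb{R}^n$, so no sequence $\{x^k\}$ can satisfy $\nu_f(x^k) \to 0$. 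By the very definition of $\widetilde{K}_{\infty, \le \bar t}(f)$, this forces $\widetilde{K}_{\infty, \le \bar t}(f) = \emptyset$ for every $\bar t \in (\mathbb{R} \cup \{+\infty\})^m$; that is, $f$ satisfies the Palais--Smale condition at every sublevel.

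It then remains to connect this with the hypothesis. By assumption $f(\mathbb{R}^n)$ has a bounded section, say $[f(\mathbb{R}^n)]_{\bar t}$ is bounded for some $\bar t \in \mathbb{R}^m$; choosing any $t^\ast \in [f(\mathbb{R}^n)]_{\bar t}$ gives $t^\ast \in f(\mathbb{R}^n)$ with $t^\ast \le \bar t$, so $[f(\mathbb{R}^n)]_{t^\ast} \subset [f(\mathbb{R}^n)]_{\bar t}$ is bounded and the standing hypothesis of Theorem~\ref{Theorem41} is met at the sublevel $t^\ast$. Since condition~(ii) holds there, Theorem~\ref{Theorem41} yields a Pareto solution of \eqref{problem}.

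As an alternative to the Rabier-function computation, one could instead verify condition~(iv). A direct inspection of the defining relation $\sum_i \lambda_i a_i + \mu x = 0$ shows that membership of $x$ in $\Gamma(f)$ forces $\mu \ne 0$ (otherwise linear independence would be violated), whence $x \in \mathrm{span}\{a_1, \ldots, a_m\}$; conversely every such $x$ lies in $\Gamma(f)$, so $\Gamma(f) = \mathrm{span}\{a_1, \ldots, a_m\}$. On this subspace, writing $x = \sum_j c_j a_j$, the map acts as the affine map $c \mapsto Gc + b$ with $G = (\langle a_i, a_j\rangle)_{i,j}$ the positive-definite Gram matrix, so $\|f(x)\| \to \infty$ as $\|x\| \to \infty$ along $\Gamma(f)$, giving $T_{\infty, \le \bar t}(f) = \emptyset$. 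I expect the only delicate point to be the reduction ensuring the sublevel may be chosen inside $f(\mathbb{R}^n)$, as required by Theorem~\ref{Theorem41}; everything else is an immediate consequence of the linear independence of the gradients.
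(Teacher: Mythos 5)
Your proposal is correct and follows essentially the same route as the paper: the paper likewise observes that $\nu_f(x)=\min_{\sum_i|\lambda_i|=1}\bigl\|\sum_i\lambda_i a_i\bigr\|=\delta$ is a positive constant by compactness and linear independence, concludes $\widetilde{K}_{\infty}(f)={K}_{\infty}(f)={T}_{\infty}(f)=\emptyset$, and invokes Theorem~\ref{Theorem41}. Your extra step replacing the given $\bar t$ by a point $t^\ast\in[f(\mathbb{R}^n)]_{\bar t}\subset f(\mathbb{R}^n)$ is a welcome refinement the paper leaves implicit, since Theorem~\ref{Theorem41} formally requires the sublevel to lie in the image.
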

\begin{proof}
For each $x\in\mathbb{R}^n$, we have $$\nu_f(x)=\min_{\sum_{i=1}^m|\lambda_i|=1}\left\|\sum_{i=1}^m \lambda_i\nabla f_i(x)\right\|=\min_{\sum_{i=1}^m|\lambda_i|=1}\left\|\sum_{i=1}^m \lambda_i a_i\right\|.$$
By the compactness of the set $\{\lambda=(\lambda_1, \ldots, \lambda_m)\in\mathbb{R}^m\,\,|\,\, \sum_{i=1}^m|\lambda_i|=1\}$, there exists $\bar\lambda=(\bar\lambda_1, \ldots, \bar\lambda_m)\in\mathbb{R}^m$ with $\sum_{i=1}^m|\bar\lambda_i|=1$ such that
$$\nu_f(x)=\left\|\sum_{i=1}^m \bar\lambda_i a_i\right\|=:\delta.$$
Since the set of vectors $\{a_1, \ldots, a_m\}$ is linearly independent and $\sum_{i=1}^m|\bar\lambda_i|=1$, we have $\nu_f(x)=\delta>0$ for all $x\in\mathbb{R}^n$. Consequently, 
$$\widetilde{K}_{\infty}(f)={K}_{\infty}(f)={T}_{\infty}(f)=\emptyset.$$
Thanks to Theorem~\ref{Theorem41}, \eqref{problem} admits a  Pareto solution.   $\hfill\Box$
\end{proof}
																			
We finish this section with a generic class of polynomial vector optimization problems having at least one Pareto solution. To do this, we begin with some definitions.
If $\kappa = (\kappa_1, \ldots, \kappa_n) \in \mathbb{N}^n,$ we denote by $x^\kappa$ the monomial $x_1^{\kappa_1} \cdots x_n^{\kappa_n}$ and by $| \kappa|$ the sum $\kappa_1 + \cdots + \kappa_n.$ Note that when $\kappa = (0, \ldots, 0),$ $x^\kappa = 1.$
																			
Let $f \colon {\Bbb R}^n \to {\Bbb R}$ be a polynomial function. Suppose that $f$ is written as $f = \sum_{\kappa} a_\kappa x^\kappa.$ 
By the {\em Newton polyhedron at infinity} of $f,$ denoted by $\mathcal{N}(f),$ we mean the convex hull in $\mathbb{R}^n$ of the set $\{\kappa \ | \ a_\kappa  \ne 0\} \cup \{0\}.$
The polynomial $f$ is said to be {\em convenient} if $\mathcal{N}(f)$ intersects each coordinate axis in a point different from the origin. The {\em Newton boundary at infinity} of $f$, denoted by $\mathcal{N}_{\infty}(f),$ is defined as the set of the faces of $\mathcal{N}(f)$ which do not contain the origin $0$ in ${\Bbb R}^n.$ For each face $\Delta$ of $\mathcal{N}_\infty(f),$  we define the {\em principal part of $f$ at infinity with respect to $\Delta,$} denoted by $f_\Delta,$ as the sum of the terms $a_\kappa x^\kappa$ such that $\kappa \in \Delta.$
																			
Let $f := (f_1, \ldots, f_m) \colon {\Bbb R}^n \rightarrow {\Bbb R}^m$ be a polynomial map. We say that $f$ is {\em convenient} if all its components $f_i$ are convenient. 
Let $\mathcal{N}(f)$ denote the Minkowski sum $\mathcal{N}(f_1) + \cdots + \mathcal{N}(f_m),$ i.e., 
$$\mathcal{N}(f)  := \{\kappa^1 + \cdots + \kappa^m \ | \ \kappa^i \in \mathcal{N}(f_i)  \  \textrm{ for all } \ i = 1, \ldots, m\}.$$
We denote by $\mathcal{N}_\infty(f)$ the set of faces of $\mathcal{N}(f)$ which do not contain the origin $0$ in ${\Bbb R}^n.$ Let $\Delta$ be a face of the $\mathcal{N}(f).$ According to \cite[Lemma~2.1]{Dinh2012-2}, we have a unique decomposition $\Delta = \Delta_1 + \cdots + \Delta_m,$ where $\Delta_i$ is a face of $\mathcal{N}(f_{i})$ for $i = 1, \ldots, m.$ We denote by $f_\Delta$ the map $(f_{1, \Delta_1}, \ldots, f_{m, \Delta_m})  \colon {\Bbb R}^n \rightarrow {\Bbb R}^m$, where $f_{i, \Delta_i}$  is the principal part of $f_i$ at infinity with respect to $\Delta_i.$
																			
\begin{definition}[see \cite{Khovanskii1978,Kouchnirenko1976}] {\rm
We say that $f = (f_1, \ldots, f_m)$ is {\em Khovanskii non-degenerate at infinity} if, and only if, for any face $\Delta$ of $\mathcal{N}_\infty(f)$ and for all $x \in ({\Bbb R} \setminus \{0\})^n \cap f_\Delta^{-1}(0),$ we have
\begin{equation*}
\textrm{rank}
\begin{pmatrix}
x_1\frac{\partial f_{1, \Delta_1}}{\partial x_1}(x) & \cdots & x_n \frac{\partial f_{1, \Delta_1}}{\partial x_n}(x) \\
\vdots & \cdots & \vdots \\
x_1\frac{\partial f_{m, \Delta_m}}{\partial x_1}(x) & \cdots & x_n \frac{\partial f_{m, \Delta_m}}{\partial x_n}(x)
\end{pmatrix} = m.
\end{equation*}
}\end{definition}
																				
\begin{remark}{\rm
We should emphasize that the class of polynomial maps (with fixed Newton polyhedra), which are non-degenerate at infinity, is an open and dense semi-algebraic set  in the corresponding Euclidean space of data. For more details, see \cite{Dinh2012-2} and \cite[Theorem~5.2]{HaHV2017}.
}\end{remark}
																					
We now present an efficient consequence of Theorem~\ref{Theorem41} ensuring the existence of Pareto solutions for the class of polynomials which are convenient and Khovanskii non-degenerate at infinity.
																					
\begin{corollary}[Frank--Wolfe type theorem] 
Let $f \colon\mathbb{R}^n\to \mathbb{R}^m$ be a polynomial map. Suppose that $f$ is convenient and Khovanskii non-degenerate at infinity. If $f(\mathbb{R}^n)$ has a bounded section, then~\eqref{problem} admits a  Pareto solution. 
\end{corollary}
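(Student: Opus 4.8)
The plan is to verify condition (iv) of Theorem~\ref{Theorem41} in its strongest possible form, by showing that convenience together with Khovanskii non-degeneracy at infinity forces $T_\infty(f)=\emptyset$. First I would produce a sublevel at which Theorem~\ref{Theorem41} is applicable: since $f(\mathbb{R}^n)$ has a nonempty bounded section $[f(\mathbb{R}^n)]_{\bar t}$, any point $\bar s\in[f(\mathbb{R}^n)]_{\bar t}$ lies in $f(\mathbb{R}^n)$ and satisfies $[f(\mathbb{R}^n)]_{\bar s}\subseteq[f(\mathbb{R}^n)]_{\bar t}$, so $[f(\mathbb{R}^n)]_{\bar s}$ is again bounded with $\bar s\in f(\mathbb{R}^n)$. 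Directly from the definitions, $T_{\infty,\le\bar s}(f)\subseteq T_\infty(f)$, so once $T_\infty(f)=\emptyset$ is proved the $M$-tameness condition holds at $\bar s$ and Theorem~\ref{Theorem41} delivers a Pareto solution. Everything therefore reduces to establishing $T_\infty(f)=\emptyset$.

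To prove $T_\infty(f)=\emptyset$ I would argue by contradiction. Suppose $t\in T_\infty(f)$; then there is $\{x^k\}\subset\Gamma(f)$ with $\|x^k\|\to\infty$ and $f(x^k)\to t$. Exactly as in the proof of Proposition~\ref{Proposition32}, I apply Lemma~\ref{CurveSelectionLemma} to the semi-algebraic set $\mathscr{A}$ of triples $(x,\lambda,\mu)$ with $\|(\lambda,\mu)\|=1$ and $\sum_{i=1}^m\lambda_i\nabla f_i(x)+\mu x=0$, obtaining smooth semi-algebraic curves $\varphi,\lambda,\mu$ on $(0,\epsilon)$ satisfying $\|\varphi(\tau)\|\to\infty$, $f(\varphi(\tau))\to t$, $\sum_{i=1}^m\lambda_i(\tau)\nabla f_i(\varphi(\tau))+\mu(\tau)\varphi(\tau)\equiv0$, and $\|(\lambda(\tau),\mu(\tau))\|\equiv1$.

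Next I would extract the Newton data of the curve. By Lemma~\ref{GrowthDichotomyLemma}, write $\varphi_j(\tau)=a_j\tau^{q_j}+\textrm{higher order}$, set $w=(q_1,\dots,q_n)$ and $J=\{j:\varphi_j\not\equiv0\}=\{j:a_j\ne0\}$; the condition $\|\varphi(\tau)\|\to\infty$ forces $\min_j q_j<0$. Only monomials supported on $J$ survive on the curve, and for each $i$ one has $f_i(\varphi(\tau))\sim f_{i,\Delta_i}(a)\,\tau^{d_i}$, where $\Delta_i$ is the face of $\mathcal{N}(f_i)$ minimizing $\kappa\mapsto\langle w,\kappa\rangle$ and $d_i=\min_\kappa\langle w,\kappa\rangle$. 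Convenience forces $d_i<0$, since the coordinate‑axis points of $\mathcal{N}(f_i)$ pair negatively with $w$; hence $\Delta_i$ avoids the origin and $\Delta:=\Delta_1+\cdots+\Delta_m\in\mathcal{N}_\infty(f)$. Because $f_i(\varphi(\tau))\to t_i$ is finite while $d_i<0$, the leading coefficients must cancel, giving $f_{i,\Delta_i}(a)=0$ for all $i$, i.e. $a\in f_\Delta^{-1}(0)$. Finally, projecting the tangency identity onto the $j$-th coordinate and multiplying by $\varphi_j(\tau)$, the leading term of $\varphi_j\,\partial_{x_j}f_i(\varphi)$ is $\bigl(x_j\,\partial_{x_j}f_{i,\Delta_i}\bigr)(a)\,\tau^{d_i}$; comparing dominant orders and using the Puiseux expansions of the $\lambda_i$ yields a nontrivial $\bar\lambda\in\mathbb{R}^m$ with $\sum_{i=1}^m\bar\lambda_i\,x_j\,\partial_{x_j}f_{i,\Delta_i}(a)=0$ for every $j$. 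This exhibits a linear dependence among the rows of the Khovanskii matrix at $a$, contradicting the rank$\,=m$ requirement of non-degeneracy.

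The main obstacle is the final clause: to invoke Khovanskii non-degeneracy I must apply it at a point of $(\mathbb{R}\setminus\{0\})^n\cap f_\Delta^{-1}(0)$, whereas the leading vector $a$ has vanishing entries for the indices $j\notin J$. I would resolve this by observing that $\Delta$ lives in the coordinate subspace $\mathbb{R}^J$ (its exponents are supported on $J$), so $f_\Delta$ depends only on the variables indexed by $J$ and the relevant rank condition is genuinely tested on the torus of that subspace, where the restriction of $a$ has all coordinates nonzero. The accompanying difficulty is the bookkeeping of orders: I must compare $d_i$, $\operatorname{ord}\lambda_i$, and $\operatorname{ord}\mu+2q_j$ carefully to confirm that the $\mu\varphi$ contribution is of strictly higher order (or is absorbed into the weighted balance) and so cannot rescue the leading relation, and to guarantee that $\bar\lambda\ne0$. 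Pinning down these comparisons, and verifying that the non-degeneracy hypothesis is actually being applied to the face $\Delta$ at a nonvanishing point, is the crux of the argument.
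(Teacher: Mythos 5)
The paper's own proof of this corollary is a two-line citation: by \cite[Theorem~3.2]{Dinh2014-2}, convenience together with Khovanskii non-degeneracy at infinity already gives $\widetilde{K}_{\infty}(f)=\emptyset$, and Theorem~\ref{Theorem41} then applies. Your reduction to Theorem~\ref{Theorem41} is correct --- and your first paragraph, which replaces $\bar t$ by a point $\bar s$ of the (nonempty) bounded section so that $\bar s\in f(\mathbb{R}^n)$ and $T_{\infty,\le\bar s}(f)\subset T_\infty(f)$, tidies up a detail the paper leaves implicit. But instead of invoking the cited result you undertake to re-prove a weakened form of it ($T_\infty(f)=\emptyset$) by curve selection and Newton polyhedra. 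That is indeed how \cite{Dinh2014-2} and the earlier work of N\'emethi--Zaharia \cite{Nemethi1990} argue, so the strategy is sound in outline; the difficulty is that the two items you postpone to your final paragraph are not routine bookkeeping --- they are essentially the whole content of the theorem you are bypassing.

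Concretely, two gaps remain. First, when some coordinates of $\varphi$ vanish identically, the set you call $\Delta_i$ is a face of $\mathcal{N}(f_i)\cap\mathbb{R}^J$, which need not be a face of $\mathcal{N}(f_i)$; one must extend the weight vector $w$ to the coordinates outside $J$ (taking the new components sufficiently large) so that $\Delta_i$ is realized as a genuine face of $\mathcal{N}(f_i)$ avoiding the origin, and only then is $\Delta=\Delta_1+\cdots+\Delta_m$ a face in $\mathcal{N}_\infty(f)$ to which the non-degeneracy hypothesis can be applied; your remark that $f_\Delta$ is independent of $x_j$ for $j\notin J$ does let you move $a$ into $(\mathbb{R}\setminus\{0\})^n$, but only after this identification is made. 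Second, and more seriously, producing a \emph{nonzero} $\bar\lambda$ from the leading terms of $\sum_{i}\lambda_i(\tau)\varphi_j(\tau)\partial_{x_j}f_i(\varphi(\tau))+\mu(\tau)\varphi_j(\tau)^2=0$ is genuinely delicate: the normalization $\|(\lambda(\tau),\mu(\tau))\|\equiv 1$ only guarantees that one of $\lambda_1,\dots,\lambda_m,\mu$ has order zero, and if that one is $\mu$ the dominant balance could a priori involve $\mu\varphi_j^2$ against sub-principal terms of the $f_i$, yielding no dependence among the rows of the Khovanskii matrix. Excluding this requires the differential identity $\tfrac{\mu}{2}\tfrac{d}{d\tau}\|\varphi\|^2=-\sum_i\lambda_i\tfrac{d}{d\tau}(f_i\circ\varphi)$ (as in the proof of Proposition~\ref{Proposition32}) and a careful comparison of all the exponents involved; your proposal asserts the outcome of that comparison without carrying it out, and indeed labels it ``the crux.'' As it stands the proposal is an accurate plan rather than a proof; the complete argument is either the several-page analysis of \cite{Dinh2014-2} or, as the paper does, a direct appeal to \cite[Theorem~3.2]{Dinh2014-2} giving the stronger conclusion $\widetilde{K}_\infty(f)=\emptyset$, followed by Theorem~\ref{Theorem41}.
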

\begin{proof} 
Thanks to \cite[Theorem~3.2]{Dinh2014-2}, $\widetilde{K}_{\infty}(f)=\emptyset$. Then the assertion follows  immediately from Theorem~\ref{Theorem41}.  $\hfill\Box$
\end{proof}
																					
\section{Conclusions}\label{Section5}
																					
In this paper, we obtained some results on the existence of Pareto solutions of  polynomial vector optimization problems. Some relationships between Palais--Smale conditions, $M$-tameness, and properness are also examined. Further research for optimization problems with constraints will be studied in future work.

\begin{acknowledgements}
The authors would like to thank the three referees for careful reading and constructive comments. A part of this work was done while the second and third authors were visiting Department of Applied Mathematics, Pukyong National University, Busan, Korea in September 2016. These authors would like to thank the department for  hospitality and support during their stay.
\end{acknowledgements}


\begin{thebibliography}{}
\bibitem{Bajbar2015}
{Bajbar, T., Stein, O.:}{  Coercive polynomials and their Newton polytopes.} SIAM J. Optim.  {\bf 25},  1542--1570  (2015)

\bibitem{bao07}
{Bao, T.Q., Mordukhovich, B.S.:} {Variational principles for set-valued mappings with applications to 	multiobjective optimization.} Control Cybern. {\bf  36}, 531--562  (2007)

\bibitem{bao10}
{Bao, T.Q., Mordukhovich, B.S.:} {Relative Pareto minimizers for multiobjective problems: existence and optimality conditions.} Math. Program. {\bf  122}, 101--138  (2010)

\bibitem{Bochnak1998}
{Bochnak, J., Coste, M.,  Roy, M.-F.:} { Real Algebraic Geometry.} Springer, Berlin (1998) 

\bibitem{borwein83}
{Borwein, J.M.: } { On the existence of Pareto efficient points.}
Math. Oper. Res. {\bf  8}, 64--73  (1983)

\bibitem{Dias2012}
{ Dias, L.R.G., Ruas, M.A.S.,  Tib\u ar, M.:} {Regularity at infinity of real mappings and a Morse--Sard theorem.} J. Topol.  {\bf  5}, 323--340  (2012)

\bibitem{Dias2015}
{Dias, L.R.G., Tib\u ar, M.:} { Detecting bifurcation values at infinity of real polynomials.} Math. Z.  {\bf 279}, 311--319  (2015)

\bibitem{Dias2017}
{Dias, L.R.G., Tanab\'e, S.,  Tib\u ar, M.:} {Toward effective detection of the bifurcation locus of real polynomial maps.} Found. Comput. Math. {\bf 17},  837--849 (2017)


\bibitem{Dinh2012-2}
{\DD inh, S.T.,  H\`a, H.V., Ph\d{a}m, T.S.:} {A Frank--Wolfe type theorem and H\"older-type global error bound for generic polynomial systems.} Preprint 2012, VIASM. Available online from http://viasm.edu.vn/wp-content/uploads/2012/11/Preprint\_1227.pdf 

\bibitem{Dinh2014-2}
{\DD inh, S.T.,  H\`a, H.V., Ph\d{a}m, T.S.:} {A Frank--Wolfe type theorem for nondegenerate polynomial programs.}
Math. Program. Ser. A.  {\bf 147}, 519--538 (2014)

\bibitem{Dries1996}
{ van den Dries, L., Miller, C.:} {Geometric categories and o-minimal structures.} Duke Math. J.  {\bf 84}, 497--540  (1996)

\bibitem{Ehrgott06}
{Ehrgott, M.:} 
{Multicriteria Optimization.} 
Springer, Berlin (2005)


\bibitem{Fernando2003}
{Fernando, J.F., Gamboa, J.M.:} {Polynomial images of $\mathbb{R}^n.$} J. Pure Appl. Algebra. {\bf 179}, 241--254  (2003)

\bibitem{Fernando2016}
{Fernando, J.F., Gamboa, J.M.,  Ueno, C.:} {The open quadrant problem: a topological proof.} in ``A mathematical tribute to Professor Jos\'e Mar\'ia Montesinos Amilibia'', 337--350, 
Dep. Geom. Topol. Fac. Cien. Mat. UCM, Madrid  (2016)

\bibitem{Fernando2017}
{Fernando, J.F., Ueno, C.:}{ A short proof for the open quadrant problem.} J. Symbolic Comput.  {\bf 79}, 57--64  (2017)

\bibitem{Gutierrez2014}
{ Guti\'errez, C.,  L\'opez, R.,  Novo, V.:} { Existence and boundedness of solutions in infinite-dimensional vector optimization problems.} J. Optim. Theory Appl. {\bf 162}, 515--547   (2014)

\bibitem{ha06}
{ H\`a, T.X.D.:} { Variants of the Ekeland variational principle for a set-valued map involving the Clarke normal cone.}  J. Math. Anal. Appl.  {\bf 316}, 346--356  (2006)

\bibitem{ha2018}
{ H\`a, T.X.D.:} { Private communication (2017)}

\bibitem{vui07}
{ H\`a,  H.V.,  Ph\d{a}m, T.S.:} {An estimation of the number of bifurcation values for real polynomials.} Acta Math. Vietnam.  {\bf 32}, 141--153  (2007)  

\bibitem{HaHV2017}
{ H\`a,  H.V.,  Ph\d{a}m, T.S.:} {  Genericity in Polynomial Optimization.}
World Scientific Publishing, Singapore (2017)

\bibitem{huong16}
{Huong, N.T.T., Yao,  J.-C.,  Yen, N.D.: } {  Polynomial vector variational inequalities under polynomial constraints and applications.} 
SIAM J. Optim.  {\bf 26}, 1060--1071  (2016)

\bibitem{Ioffe2007}
{ Ioffe, A.:} {A Sard theorem for tame set-valued mappings.} J. Math. Anal. Appl.  {\bf 335}, 882--901  (2007)

\bibitem{jahn04}
{Jahn, J.:} {Vector Optimization. Theory, Applications, and Extensions.}
Springer, Berlin, (2004)

\bibitem{Jelonek2002}
{Jelonek, Z.:} { Geometry of real polynomial mappings.} Math. Z. {\bf 239}, 321--333  (2002)

\bibitem{Jelonek2014}
{Jelonek, Z., Kurdyka, K.:} { Reaching generalized critical values of a polynomial.} Math. Z.  {\bf 276}, 557--570  (2014)

\bibitem{Jeyakumar2014}
{  Jeyakumar, V.,  Lasserre, J.B.,  Li, G.:} { On polynomial optimization over non-compact semi-algebraic sets.} J. Optim. Theory Appl. {\bf 163},  707--718  (2014)

\bibitem{Khovanskii1978}
{ Khovanskii, A.G.:} { Newton polyhedra and toroidal varieties.} Funct. Anal. Appl. {\bf 11}, 289--296  (1978)

\bibitem{Kouchnirenko1976} 
{Kouchnirenko, A.G.:} { Polyhedres de Newton et nombre de Milnor.} Invent. Math.  {\bf 32}, 1--31  (1976) 

\bibitem{kurdyka00}
{ Kurdyka, K., Orro,  P., Simon, S.:} { Semialgebraic Sard theorem for generalized critical values.} 
J. Differential Geom. {\bf 56},  67--92  (2000)

\bibitem{Loi1998}
{ Loi, T.L.,  Zaharia, A.:} { Bifurcation sets of functions definable in o-minimal structures.} Illinois J. Math.  {\bf 42}, 449--457  (1998) 

\bibitem{Magron2014}
{ Magron, V.,  Henrion,  D.,  Lasserre, J.B:} {Approximating Pareto curves using semidefinite relaxations.} {Oper. Res. Lett.}  {\bf 42},  432--437   (2014)

\bibitem{Magron2015}
{ Magron, V.,  Henrion,  D.,  Lasserre, J.B:} {  Semidefinite approximations of projections and polynomial images of semialgebraic sets.}  SIAM J. Optim.  {\bf 25}, 2143--2164  (2015)

\bibitem{mauhin10}
{ Mawhin,  J.,  Willem, M.:} { Origin and evolution of the Palais--Smale condition in critical point theory.}
J. Fixed Point Theory Appl.  {\bf 7}, 265--290 (2010)

\bibitem{Motzkin1967}
{Motzkin, T.: } {The arithmetic-geometric inequalities.}
In: Shisha O. (ed.) {\it Inequalities.}  
Proc. Sympos. Wright-Patterson Air Force Base, Ohio, August 19--27, 1965, Academic Press (1967), 205--224.

\bibitem{Nemethi1990} 
{ N\'emethi, A.,  Zaharia, A.:} { On the bifurcation set of a polynomial function and Newton boundary.} Publ. Res. Inst. Math. Sci.  {\bf 26}, 681--689    (1990)

\bibitem{pau97}
{ P\u aunescu, L.,  Zaharia, A.:} {  On the \L ojasiewicz exponent at infinity for polynomial functions.}
Kodai Math. J.  {\bf 20}, 269--274  (1997)

\bibitem{Rabier1997}
{ Rabier, P.J.:} {Ehresmann fibrations and Palais--Smale conditions for morphisms of Finsler manifolds.}
Ann. of Math.  {\bf 146}, 647--691  (1997)
\end{thebibliography}


\end{document}